\numberwithin{equation}{section}
\newtheorem{theorem}{Theorem}[section]
\newtheorem{lemma}[theorem]{Lemma}
\newtheorem{definition}{Definition}[section]
\newtheorem{corollary}[theorem]{Corollary}
\newtheorem{remark}[theorem]{Remark}
\newcommand{\cl}[1]{\mathcal{#1}} 
\newcommand{\bb}[1]{\mathbb{#1}}
\begin{document}

\title{On synthetic and transference properties of group homomorphisms }

\author[G.~K.~Eleftherakis]{G. K. Eleftherakis}

\address{G. K. Eleftherakis\\ University of Patras\\Faculty of Sciences\\
Department of Mathematics\\265 00 Patra Greece }

\email{gelefth@math.upatras.gr}

\thanks{2010 {\it Mathematics Subject Classification.} 43A05, 43A22, 43A45, 47L05, 47L30} 

\keywords{Group homomorphism, Haar measure, Spectral synthesis, Operator synthesis, MASA bimodule}

\date{}

\maketitle

\begin{abstract} 
We study Borel homomorphisms $\theta : G\rightarrow H$ for 
arbitrary locally compact second countable groups $G$ and $H$ for which 
the measure $$\theta _*(\mu )(\alpha )=\mu (\theta ^{-1}(\alpha ))\quad \text{for } \quad \alpha \subseteq H \text{\;\;a \;\;Borel set } $$
 is absolutely continuous with respect to $\nu,$ where $\mu $ (resp. $\nu $)  is a Haar measure for $G,$ (resp. $H$). 
 We define a natural mapping $\cl G$ from the class of maximal abelian selfadjoint 
algebra bimodules (masa bimodules) in $B(L^2(H))$ 
into the class of masa bimodules in $B(L^2(G))$ and we use it to prove that if $k\subseteq G\times G$  is a set of operator synthesis, then 
$(\theta \times \theta)^{-1} (k)$ is also a set of operator synthesis and if $E\subseteq H$ is a set of local synthesis for the 
Fourier algebra $A(H)$, then  
$\theta ^{-1}(E)\subseteq G$ is a set of local synthesis for $A(G).$ We also prove that if  $\theta ^{-1}(E)$ is an $M$-set (resp. $M_1$-set), then $E$ 
is an $M$-set (resp. $M_1$-set) and if $Bim(I^\bot )$ is the masa bimodule generated by the annihilator of the ideal $I$ in $VN(G)$, then there exists an ideal $J$ 
such that $\cl G(Bim(I^\bot ))=Bim(J^\bot ).$ If this ideal $J$ is an ideal of multiplicity then $I$ is an ideal of multiplicity.
In case $\theta _*(\mu )$ is a Haar measure for $\theta (G)$ 
we show that $J$ is equal to the ideal $\rho _*(I)$ generated by $\rho (I),$ 
where $\rho (u)=u\circ \theta , \;\;\forall \;u\;\in \;I.$
\end{abstract}

\section{Introduction}
Arveson discovered the connection between spectral synthesis and operator synthesis, \cite{arv}. Froelich found 
the precise connection for separable abelian groups, \cite{F}, and Spronk and Turowska for separable compact groups, \cite{SSTT}. 
Ludwig and Turowska generalized the previous 
results in 
the case of locally compact second countable groups, $G.$ They proved that if $E\subseteq G$ is a closed set and $E^*=\{(s,t)\in G\times G: ts^{-1}\in E \}$,
then $E$ is a set of local synthesis if and only if $E^*$ is a set of operator synthesis, \cite{lt}. Anoussis, Katavolos and Todorov stated in \cite{akt} 
that given a closed ideal $I$ of the Fourier algebra $A(G),$ where $G$ is a locally compact second countable group, there are two natural ways to construct a $w^*$-closed 
maximal abelian selfadjoint (masa) bimodule:

(i) Let $I^\bot $ be the annihilator of $I$ in $VN(G)$ and then take the masa bimodule $Bim(I^\bot )$ in the space of bounded operators acting on $L^2(G), 
B(L^2(G)),$ generated by $I^\bot .$

(ii) Consider the space $Sat(I)=\overline{span\{N(I)T(G)\}}^{\|\cdot\|_{T(G)}},$ where $N(u)(s,t)=u(ts^{-1})$ for all $u\in I$ 
and $T(G)$ is the projective tensor product $L^2(G)\hat \otimes L^2(G)$ and then take its annihilator $Sat(I)^\bot $ in $B(L^2(G)).$ 

One of their main results is that  $Bim(I^\bot )=Sat(I)^\bot .$ They used this in order to prove that if $A(G)$ possesses an approximate identity, then 
$E\subseteq G$ is a set of spectral synthesis if and only if $E^*$ is a set of operator synthesis.

The transference of results from Harmonic Analysis to Operator Theory and vice versa is not limited to the case of synthesis. In \cite{stwt},
Shulman, Todorov and Turowska proved that if $G$ is a locally compact second countable group and $E\subseteq G$, then $E$ 
is  an $M$-set (resp. $M_1$-set) if and only if $E^*$ is an $M$-set (resp. $M_1$-set). Subsequently, Todorov and Turowska in \cite{tt} proved that an ideal $J\subseteq A(G)
$ is an ideal of multiplicity if and only if $Bim(J^\bot )$   is an operator space of multiplicity.

In Section \ref{2222}, we consider  arbitrary locally compact second countable groups $G$ and $H,$ Borel homomorpisms $\theta : G\rightarrow H$ for which 
the measure $$ \theta _*(\mu )(\alpha )=\mu (\theta ^{-1}(\alpha )) \quad \text{for}\;\alpha \subseteq H\;\text{ a Borel set} $$ 
is absolutely continuous with respect to $\nu ,$ $(\theta _*(\mu )<<\nu ),$ where $\mu $ (resp. $\nu $) is a Haar measure for $G$ (resp. $H$). 
Recall that Borel measurable homomorphisms between locally compact groups are automatically continuous,
 \cite{kl},   \cite{mm}. We define a natural mapping $\cl G$ from 
the class of masa  bimodules in $B(L^2(H))$ to the class of masa  bimodules in $B(L^2(G)).$ We prove that 
$$ \cl G( M_{max}(k) )= M_{max}((\theta \times \theta)^{-1}(k) ), \;\;\;\cl G(M_{min}(k))=M_{min}((\theta \times \theta)^{-1}(k) ), $$
where   $M_{max}(k)$ is the biggest masa bimodule supported on the $\omega$-closed $k,$ see the definition below, and  $M_{min}(k)$  
is the smallest. Therefore if $M_{max}(k)$  is a synthetic operator space, then $M_{max}((\theta \times \theta)^{-1}(k) )$ is operator synthetic. This implication can also be deduced
from theorem 4.7 of \cite{st} or from theorem 5.2 of \cite{ele2}. Here we present a different proof. We also prove that if $E\subseteq H$ 
is a set of local synthesis, then $\theta ^{-1}(E)$ is   a set of local synthesis and if $\cl U$ 
is a $w^*$-closed masa bimodule for which $\cl G(\cl U)$ contains a non-zero compact operator (or a non-zero finite rank operator or a rank one operator),
so does $\cl U.$ We use this result to prove that if $\theta ^{-1}(E)$ is an $M$-set (resp. $M_1$-set), then $E$ 
is an $M$-set (resp. $M_1$-set). If $I$ is an ideal of $A(H)$, we prove that there exists an ideal $J\subseteq A(G)$ 
such that 
\begin{equation}\label{ideal} \cl G(Bim(I^\bot ))=Bim(J^\bot ),\;\;\;\;Sat(J)=\overline{span \{ N(\rho (I))T(G) \} }^{\|\cdot\|_{T(G)}}, 
\end{equation}
$$\rho (u)=u\circ \theta ,\;\;\forall \;u\;\in \;I, \;\;\;N(\rho (u))(s,t)=\rho (u)(ts^{-1}).$$ We use equalities (\ref{ideal}) to prove 
that if $J$ is an ideal of multiplicity, then $I$ is an ideal of multiplicity.   In Section \ref{3333} we assume that the measure 
$\theta _*(\mu )$ is  a Haar measure for the group $\theta (G).$ We prove that if $I$ is a closed ideal of $A(H)$, then $\rho (I)\subseteq A(G)$ and so we can choose in (\ref{ideal}) 
as $J$ the ideal $\rho _*(I)$ generated by  $\rho(I).$ We also prove that if $A(G)$ possesses an approximate identity and $E$ 
is an ultra strong Ditkin  set, then $\theta ^{-1}(E)$ is also an ultra strong  Ditkin set.

We now present the definitions and notation that will be used in this paper. If $S$ is a subset of a linear space, we denote by $[S]$  
its linear span. If $H$ and $K$ are Hilbert spaces, $B(H, K)$ is the set of bounded 
operators from $H$ to $K.$ We write $B(H)$ for $B(H, H).$ If $\cl X\subseteq B(H, K)$ is a subspace,
we write $Ref(\cl X)$ for the reflexive hull of $\cl X,$ that is,
$$Ref(\cl X)=\{T\in B(H, K): T\xi \in \overline{\cl X\xi }, \;\;\;\forall \;\xi \;\in \;H\}.$$

Let $G$ be a locally compact group with Haar measure $\mu ,$ and $T(G)$ the projective tensor product 
$L^2(G)\hat \otimes L^2(G).$ Every element $h\in T(G)$ is an absolutely convergent series, $$ h=\sum_i f_i\otimes g_i , \;\;
f_i,g_i\;\in \; L^2(G),\;\;i\;\in \;\bb N,$$
where $ \sum_i\|f_i\|_2\|g_i\|_2<+\infty .$ Such an element may be considered as a function $h: G\times G\rightarrow 
\bb C,$ defined by  $h(s,t)=\sum_i f_i(s)g_i(t).$ The norm in $T(G)$  is given by 
 $$ \|h\|_t=\inf\{\sum_i\|f_i\|_2\|g_i\|_2: h=\sum_i f_i\otimes g_i \}.$$
The space $T(G)$ is predual to $B(L^2(G)).$ The duality is given by 
$$(T,h)_t=\sum_i(T(f_i), \overline{g_i}),$$  where $h$ is as above and $(\cdot,\cdot)$ is the inner product of $L^2(G).$ 

A subset $F\subseteq G\times G$ is called marginally null if $F\subseteq (\alpha \times G)\cup (G\times \beta ),$  
where $\alpha$\ and $\beta $ are Borel sets such that $\mu (\alpha )=\mu (\beta)=0. $ In this case we 
write $F\simeq \emptyset.$ If $F_1$ and $F_2$ are subsets of $G\times G$, we write $F_1\simeq F_2$ 
if the symmetric difference $F_1\bigtriangleup F_2$ is marginally null. If $F\subseteq G\times G$, 
we denote by $M_{max}(F)$ the subspace of $B(L^2(G))$ consisting of all those operators $T$ satisfying 
$$(\alpha \times \beta )\cap F\simeq \emptyset\Rightarrow P(\beta )TP(\alpha )=0.$$ Here $P(\beta ),$  
and similarly $P(\alpha  )$ 
is the projection onto $L^2(\beta ,\mu ).$ 
We usually identify the algebra $ L^\infty (G, \mu ) $ with the algebra of operators 
$$M_f: L^2(G) \rightarrow  L^2(G) , g\rightarrow fg,$$ where $f\in L^\infty (G, \mu ) .$ 
This algebra is a maximal abelian selfadjoint algebra, referred to as ``masa'' in what follows. If $F\subseteq G\times G$, then $M_{max}(F)$ 
is an $L^\infty (G)$-bimodule. An $L^\infty (G)$-bimodule will be referred to as a ``masa bimodule.''
The space $ M_{max}(F) $ is reflexive. Also, there exists a $w^*$-closed masa bimodule $\cl U_0$ 
with the property that it is the smallest $w^*$-closed masa bimodule $\cl U$ such that 
$Ref(\cl U)= M_{max}(F) .$ We write $\cl U_0=M_{min}(F).$ 
Given a reflexive masa bimodule $\cl V$, there exists a set $k\subseteq G\times G$ which is marginally equal to 
$(\cup _{n\in \bb N}\alpha _n\times \beta _n)^c,$ where $\alpha _n, \beta _n$  are Borel subsets of $G$ 
such that $\cl V=M_{max}(k). $ An operator $T$ belongs to $\cl V$ if and only if 
$P(\beta _n)TP(\alpha _n)=0, \;\;\forall \;n.$
A set $k$ that is marginally equal to a complement of a countable union of 
Borel rectangles is called an $ \omega $-closed set.

An $\omega $-closed set $k$ is called operator synthetic if 
$M_{max}(k)=M_{min}(k).$

If $s\in G,$ we denote by $\lambda _s$ the operator given by 
$$\lambda _s(f)(t)=f(s^{-1}t),\;\;\;\forall \;f\;\in \;L^2(G).$$
The homomorphism $G\rightarrow B(L^2(G)): s\rightarrow \lambda _s$ is 
called the left regular representation.  We denote by $A(G)$ the set of maps  
$u:G\rightarrow \bb C$ given by $ u(s)=(\lambda _s(\xi ), \eta ) $ for $\xi ,\eta \in L^2(G).$ For any $u\in A(G)$, we write  
$$\|u\|_{A(G)}=\inf\{\|\xi \|_2\|\eta \|_2:u(s)=(\lambda _s(\xi ), \eta ) \;\forall \;s \}.$$
The set $A(G)$ is an algebra under the usual multiplication, and $\|\cdot\|_{A(G)}$ is  a norm making $A(G)$ a commutative 
regular semisimple Banach algebra. We call this algebra  a Fourier algebra. We denote by $VN(G)$ the following von Neumann subalgebra 
of $B(L^2(G))$:
$$VN(G)=[\lambda _s: s\in G]^{-w^*} .$$ 
 This algebra is the dual of the Fourier algebra $A(G).$ 
The duality is given by $(\lambda _s,u)_\alpha =u(s)$ for all $u\in A(G)$ and $s\in G.$

If $E\subseteq G$ is a closed set, we write 
$$I(E)=\{u\in A(G): u|_E=0\},$$
$$J_0(E)=\{u\in A(G): \exists \;\Omega\;\; open\;\;set,\;E\subseteq \Omega , \;\;u|_\Omega =0 \}.,$$
and $J(E)$ for the closure of $J_0(E)$ in $A(G).$ The spaces $I(E) $ and $ J(E)$ are closed ideals of $A(G)$ 
and $J(E)\subseteq I(E).$ The set $E$ is called a set of spectral synthsesis  if $J(E)= I(E).$ Let 
$I^c(E)$ be the set of all compactly supported functions $f\in I(E).$ We say that $E$ 
is a set of local spectral synthesis if $I^c(E)\subseteq J(E).$ 

If $u: G \rightarrow L$ is an arbitrary map, we write $N(u)$ for the map 
$$N(u): G\times G\rightarrow L, \;\;(s,t)\rightarrow u(ts^{-1}).$$ If $u\in A(G)$,
the map $N(u)$ can be written as $$N(u)=\sum_{i\in \bb N}\phi_i \otimes \psi_i,$$
where $\phi _i, \psi_i: G\rightarrow \bb C $ are Borel maps  satisfying $$ \|\sum_{i\in \bb N}|\phi _i|^2\|_\infty <+\infty  , \;\;\;
\|\sum_{i\in \bb N}|\psi _i|^2\|_\infty <+\infty  .$$ The map $N(u)$ satisfies $N(u)T(G)\subseteq T(G).$ See in \cite{spronk} 
for more details. 

If $I$ is a closed ideal of $A(G), I^\bot $ is its annihilator in $VN(G):$
$$I^\bot =\{T\in VN(G): (T,u)_\alpha =0,\;\;\forall \;u\;\in \;I\}.$$
We also write $$Sat(I)=[N(I)T(G)]^{-\|\cdot\|_t}\subseteq T(G).$$
If $\cl X$ is a subspace of $VN(G)$, we write $Bim(\cl X)$ for the folowing subspace of $B(L^2(G)):$
$$Bim(\cl X)=[M_\phi XM_\psi : X\in \cl X, \;\phi , \psi \in L^\infty (G)] ^{-w^*} .$$

\section{Synthetic and transference properties of group homomorphisms }\label{2222}

In this section we assume that  $G$ and $ H$ are locally compact second countable groups with Haar measures $\mu $ and $\nu $
 respectively, $\theta :G\rightarrow H$ is  a continuous homomorphism and $\theta _*(\mu )<<\nu. $ 
We conclude that the map 
$$\hat \theta : L^\infty (H)\rightarrow L^\infty (G), \;\;\;\hat \theta (f)=f\circ \theta $$ 
is a weak*-continuous homomorphism. If $\alpha \subseteq H,$ (resp. $\beta \subseteq G$) is a Borel set,
we denote by $P(\alpha ),$ (resp. $Q(\beta ))$ the projection onto $L^2(\alpha , \nu )$ (resp. $L^2(\beta ,\mu )$).
 If $\phi \in L^\infty (H)$ (resp. $\psi \in L^\infty (G)), $ we denote by 
$M_\phi$ (resp. $M_\psi $) the operator $L^2(H)\rightarrow L^2(H): f\rightarrow f\phi$
(resp. $L^2(G)\rightarrow L^2(G): g\rightarrow g\psi $ ). We define the following ternary ring of operators (TRO):
$$\cl N=\{X: XP(\alpha )=Q(\theta ^{-1}(\alpha ))X, \text{for}\;\alpha \subseteq H,\;\text{a Borel set}\}.$$
(For the definition and properties of TROs, see \cite{bm}). 
Observe that for every $\phi \in L^\infty (H)$ and $ X\in \cl N$, we have $XM_\phi = M_{\phi \circ \theta } X.$ Suppose that 
$Ker(\hat \theta )=L^\infty (\alpha _0^c),$ for some Borel set $\alpha _0^c\subseteq H.$ Then the map 
$$L^\infty (\alpha _0)\rightarrow L^\infty (G), \;\;\;\hat \theta (f|_{\alpha _0})=f\circ \theta , \;\;f\in L^\infty (H)$$ 
is  a one-to-one $*$-homomorphism. We now define the following TRO:
$$\cl M=\{X: XP(\alpha )=Q(\theta ^{-1}(\alpha ))X, \;\;\alpha \subseteq \alpha _0,\;\;Borel\}\subseteq B(L^2(\alpha _0), L^2(G)).$$
If $R$ is the projection onto $L^2(\alpha _0),$ we can easily see that $\cl N=\cl M R.$

Let $\cl A\subseteq B(L^2(G))$ be the commutant of the algebra $\{M_{\phi \circ \theta } : \;\phi \in \; L^\infty (H)\}.$
By theorem 3.2 of \cite{ele}, 
$$[\cl M^*\cl M] ^{-w^*} =L^\infty (\alpha _0),\;\;\; [\cl M\cl M^*] ^{-w^*}  =\cl A.$$
For every masa bimodule $\cl U\subseteq B(L^2(H)),$ we define 
$$\cl G(\cl U)=[\cl N\cl U\cl N^*] ^{-w^*} $$ and for every masa bimodule $\cl U\subseteq B( L^2(\alpha _0) ),$ we define 
$$ \cl F(\cl U) =[\cl M\cl U\cl M^*] ^{-w^*}. $$ 
By proposition 2.11 of \cite{ele}, the map $\cl F$ is a biijection from the masa bimodules acting on $L^2(\alpha _0) $ 
onto the $\cl A$-bimodules acting on $L^2(G). $ The inverse of $\cl F$ is given by 
 $$\cl F^{-1}(\cl V)=[\cl M^*\cl V\cl M] ^{-w^*}. $$ We can easily see that 
$$\cl G(\cl U)=\cl F(R\cl UR).$$
\begin{remark} \em{We inform the reader of the following:

(i) The spaces $\cl U$ and $ \cl F(\cl U) $ are stably isomorphic in the sense that  there exists a Hilbert space $H$ 
such that the spaces $ \cl U\bar \otimes B(H ) $ and $ \cl F(\cl U)\bar \otimes B(H ) $ are isomorphic as dual 
operator spaces, where $\bar \otimes $ is the normal spatial tensor product, \cite{ept}.

(ii)  The spaces $\cl U, \cl F(\cl U) $ are spatially Morita equivalent in the sense of \cite{ele2}.}
\end{remark}

\begin{lemma}\label{21} Suppose $k\subseteq \alpha_0 \times \alpha _0$ is an $\omega $-closed set.  Suppose further that $\cl U= M_{max}(k)$
and $\cl V=M_{max}(\sigma )$, where $\sigma =(\theta \times \theta) ^{-1}(k)$. Then  $\cl F(\cl U)=\cl V.$
\end{lemma}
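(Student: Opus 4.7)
The strategy is to combine the rectangle decomposition of $\omega$-closed sets with the TRO intertwining relations defining $\cl M$, and then to appeal to the bijection of Proposition 2.11 of \cite{ele}. First, I would fix a decomposition $k \simeq (\bigcup_n \alpha_n \times \beta_n)^c$ with $\alpha_n, \beta_n$ Borel subsets of $\alpha_0$. The hypothesis $\theta_*(\mu) \ll \nu$ guarantees that $\nu$-null sets pull back to $\mu$-null sets, so $(\theta\times\theta)^{-1}$ preserves marginal nullity; consequently
\[
\sigma = (\theta\times\theta)^{-1}(k) \simeq \Bigl(\bigcup_n \theta^{-1}(\alpha_n) \times \theta^{-1}(\beta_n)\Bigr)^c,
\]
and the defining conditions for $M_{max}(k)$ and $M_{max}(\sigma)$ reduce, respectively, to $P(\beta_n)TP(\alpha_n) = 0$ and $Q(\theta^{-1}(\beta_n)) T Q(\theta^{-1}(\alpha_n)) = 0$ for all $n$.

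For the inclusion $\cl F(\cl U) \subseteq \cl V$, I would take arbitrary $X, Y \in \cl M$ and $T \in \cl U$ and use the intertwining $XP(\alpha) = Q(\theta^{-1}(\alpha))X$ (together with its adjoint $P(\alpha)X^* = X^*Q(\theta^{-1}(\alpha))$) to compute
\[
Q(\theta^{-1}(\beta_n))\,(XTY^*)\,Q(\theta^{-1}(\alpha_n)) = X\, P(\beta_n)\, T\, P(\alpha_n)\, Y^* = 0
\]
for every $n$, and then pass to the $w^*$-closure. For the reverse inclusion, I would first verify that $\cl V = M_{max}(\sigma)$ is an $\cl A$-bimodule: each test projection $Q(\theta^{-1}(\alpha_n)) = M_{\chi_{\alpha_n}\circ\theta}$ lies in $\{M_{\phi\circ\theta}\}$ and so commutes with every $A \in \cl A = \{M_{\phi\circ\theta}\}'$; hence for $T \in \cl V$ and $A, B \in \cl A$,
\[
Q(\theta^{-1}(\beta_n))(ATB)Q(\theta^{-1}(\alpha_n)) = A\,Q(\theta^{-1}(\beta_n))\,T\,Q(\theta^{-1}(\alpha_n))\,B = 0.
\]
By Proposition 2.11 of \cite{ele}, $\cl V$ therefore lies in the image of $\cl F$.

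Finally, a computation symmetric to the first one, using that $(\alpha\times\beta)\cap k\simeq\emptyset$ implies $(\theta^{-1}(\alpha)\times\theta^{-1}(\beta))\cap\sigma = (\theta\times\theta)^{-1}((\alpha\times\beta)\cap k)\simeq\emptyset$, shows that $\cl F^{-1}(\cl V) = [\cl M^*\cl V\cl M]^{-w^*} \subseteq M_{max}(k) = \cl U$. Applying the bijection $\cl F$ to this inclusion and using $\cl F\cl F^{-1} = \mathrm{id}$ on $\cl A$-bimodules yields $\cl V \subseteq \cl F(\cl U)$, completing the proof. The step I expect to be the main obstacle is the clean translation of marginal nullity between $H\times H$ and $G\times G$: all of the algebraic manipulations above hinge on it, and one must explicitly use the absolute continuity hypothesis $\theta_*(\mu) \ll \nu$ to justify that the rectangle decomposition of $k$ really does describe $\sigma$ up to marginal equivalence.
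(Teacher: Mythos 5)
Your proposal is correct and follows essentially the same route as the paper: both rest on pulling back the rectangle decomposition of $k$ and using the intertwining relations of $\cl M$ to obtain the two key inclusions $\cl M\,\cl U\,\cl M^*\subseteq \cl V$ and $\cl M^*\,\cl V\,\cl M\subseteq \cl U$. The only (cosmetic) difference is in the last step: you verify that $\cl V$ is an $\cl A$-bimodule and invoke the bijection of Proposition 2.11 of \cite{ele}, whereas the paper sandwiches $\cl M\cl M^*\cl V\cl M\cl M^*$ between $\cl M\cl U\cl M^*$ and $\cl V$ and uses that $[\cl M\cl M^*]^{-w^*}=\cl A$ is unital --- the same underlying fact.
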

\begin{proof} Suppose that $\alpha_n\subseteq  \alpha _0$ and $\beta _n\subseteq \alpha_0, n\in \bb N$ are Borel sets such that 
$k=(\cup _n(\alpha_n \times \beta_n ))^c$. Then $\sigma =(\cup _n(\theta ^{-1}(\alpha_n )\times \theta ^{-1}(\beta_n) ))^c.$  
If $Z\in \cl U, X, Y\in \cl M,$ then 
$$ Q(\theta ^{-1}(\beta _n)) XZY^*Q(\theta ^{-1}(\alpha _n))=XP(\beta _n)ZP(\alpha _n) Y^*=X0Y^*=0, \;\;\forall n.$$
Therefore $\cl M\cl U\cl M^*\subseteq  \cl V.$ Similarly, we can prove $\cl M^*\cl V\cl M\subseteq  \cl U.$  
The above relations imply that $$\cl M\cl M^*\cl V\cl M\cl M^*\subseteq \cl M\cl U\cl M^*\subseteq \cl V.$$
Since $[\cl M\cl M^*] ^{-w^*}  $ is an unital algebra, $$\cl V=[ \cl M \cl U \cl M^* ]^{-w^*}=\cl F(\cl U).$$
\end{proof}

\begin{lemma}\label{22} Let $\cl U,$ be as in Lemma \ref{21}, Then  $\cl F(\cl U_{min})=\cl F(\cl U)_{min}.$
\end{lemma}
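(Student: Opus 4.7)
The plan is to prove $\cl F(\cl U_{min}) = \cl F(\cl U)_{min}$ via two inclusions, in the spirit of the proof of Lemma \ref{21}. Writing $\cl V = \cl F(\cl U)$ (which equals $M_{max}(\sigma)$ by Lemma \ref{21}), the goal is $\cl F(M_{min}(k)) = M_{min}(\sigma)$.

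For the forward inclusion $M_{min}(\sigma) \subseteq \cl F(\cl U_{min})$, note first that $\cl F(\cl U_{min})$ is a $w^*$-closed $\cl A$-bimodule (hence a masa bimodule) contained in $\cl F(\cl U) = \cl V$ by monotonicity of $\cl F$. I would then show that $Ref(\cl F(\cl U_{min})) = \cl V$, by establishing the transport property $Ref(\cl F(\cl W)) = \cl F(Ref(\cl W))$ for every $w^*$-closed masa bimodule $\cl W$ acting on $L^2(\alpha_0)$. This should follow from the TRO identities $[\cl M^*\cl M]^{-w^*} = L^\infty(\alpha_0)$ and $[\cl M\cl M^*]^{-w^*} = \cl A$, together with the defining property $Ref(\cl X)\xi = \overline{\cl X \xi}$, reducing the equality to a vector-level density argument. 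Applied with $\cl W = \cl U_{min}$ (which has $Ref(\cl U_{min}) = \cl U$), this yields $Ref(\cl F(\cl U_{min})) = \cl V$, and the defining minimality of $M_{min}(\sigma)$ forces $M_{min}(\sigma) \subseteq \cl F(\cl U_{min})$.

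For the reverse inclusion $\cl F(\cl U_{min}) \subseteq M_{min}(\sigma)$, the subtlety is that $M_{min}(\sigma)$ is defined as the smallest $w^*$-closed masa bimodule (not $\cl A$-bimodule) with full reflexive hull, so $\cl F^{-1}$ cannot be applied to it directly. My plan is to verify that $M_{min}(\sigma)$ is automatically an $\cl A$-bimodule. This uses the $(\theta,\theta)$-fiber-saturation of $\sigma = (\theta\times\theta)^{-1}(k)$: each $a \in \cl A$ commutes with every projection $M_{1_{\theta^{-1}(B)}}$ for Borel $B \subseteq H$, so $a$ preserves every subspace $L^2(\theta^{-1}(B),\mu)$. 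Consequently, for any rank-one operator $\xi \otimes \bar\eta$ with $\mathrm{supp}(\eta) \times \mathrm{supp}(\xi) \subseteq \sigma$ marginally, $a\xi$ remains supported in $\theta^{-1}(\theta(\mathrm{supp}(\xi)))$, preserving the inclusion in $\sigma$; analogously for right $\cl A$-multiplication. Hence the natural rank-one generators of $M_{min}(\sigma)$ are $\cl A$-stable, and so is their $w^*$-closure. Then $\cl F^{-1}(M_{min}(\sigma))$ is a $w^*$-closed masa bimodule in $\cl U$ with $Ref(\cl F^{-1}(M_{min}(\sigma))) = \cl U$ (by the transport property again), and minimality of $\cl U_{min}$ yields $\cl U_{min} \subseteq \cl F^{-1}(M_{min}(\sigma))$, equivalently $\cl F(\cl U_{min}) \subseteq M_{min}(\sigma)$.

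The main obstacles are the two supporting claims above: (i) the preservation $Ref\circ\cl F = \cl F\circ Ref$, which rests on the spatial Morita equivalence implemented by the TRO $\cl M$ (indicated in the Remark preceding Lemma \ref{21} and developed in \cite{ele2}); and (ii) the $\cl A$-bimodule property of $M_{min}(\sigma)$, which is essentially a consequence of the fiber-saturation of $\sigma$ but requires a precise description of $M_{min}$ via rank-one generators (or dually, invariance of $M_{min}(\sigma)_\perp \subseteq T(G)$ under a suitable $\cl A$-action on the predual). I expect (i) to be the more delicate, as it underlies both inclusions, whereas (ii) follows once the correct rank-one/predual description of $M_{min}$ is in place.
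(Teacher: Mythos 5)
Your overall architecture coincides with the paper's: both inclusions are obtained by combining (a) the fact that $\cl F$ and $\cl F^{-1}$ carry a $w^*$-closed masa bimodule with full reflexive hull on one side to one with full reflexive hull on the other, with (b) the minimality of $\cl U_{min}$ and of $M_{min}(\sigma)$. Your ``transport property'' $Ref(\cl F(\cl W))=\cl F(Ref(\cl W))$ is exactly what the paper establishes, in the instance $\cl W=\cl U_{min}$, by a vector-level Hahn--Banach argument using $\overline{\cl U_{min}\eta}=\overline{\cl U\eta}$; that half of your plan is sound and matches the paper.

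The genuine gap is your step (ii), the $\cl A$-bimodule property of $M_{min}(\sigma)$. The property is true and is indeed the crux (the paper also needs it, for the final inclusion $[\cl A\,\cl F(\cl U)_{min}\,\cl A]^{-w^*}\subseteq\cl F(\cl U)_{min}$), but your primary route to it --- that ``the natural rank-one generators of $M_{min}(\sigma)$ are $\cl A$-stable'' --- rests on a false premise: $M_{min}(\sigma)$ is not in general the $w^*$-closed span of rank-one (or even compact) operators. There exist nonzero reflexive masa bimodules $M_{max}(\sigma)$ containing no nonzero compact operator, while $M_{min}(\sigma)\neq 0$ because $Ref(M_{min}(\sigma))=M_{max}(\sigma)$; the correct description of $M_{min}(\sigma)$ is through the predual (as the annihilator of the set of elements of $T(G)$ whose $\omega$-support misses $\sigma$), not through rank ones. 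Your fallback --- invariance of the preannihilator under the predual $\cl A$-action --- is the right kind of statement but is left entirely unexecuted, and it requires the $\omega$-support machinery for general elements of $T(G)$ plus a measure-theoretic argument that operators commuting with every $Q(\theta^{-1}(\alpha))$ do not enlarge $\theta$-saturated supports. The paper avoids all of this by a different device: it embeds $\cl F(\cl U)$ as the corner of the CSL algebra $\Pi=\left(\begin{smallmatrix}\cl A&\cl F(\cl U)\\ 0&\cl A\end{smallmatrix}\right)$, identifies $\Pi_{min}$ with $\left(\begin{smallmatrix}\cl A&\cl F(\cl U)_{min}\\ 0&\cl A\end{smallmatrix}\right)$ using the proof of Proposition 4.7 of \cite{ele} and a $Ref$ computation, and then invokes \cite[Theorem 22.19]{dav}, which says that $\Pi_{min}$ is again an algebra; this yields $\cl A\,\cl F(\cl U)_{min}\,\cl A\subseteq\cl F(\cl U)_{min}$ at once. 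Until you either carry out the predual-invariance argument in detail or substitute something like this CSL-algebra step, the reverse inclusion remains unproved.
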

\begin{proof}
If $\cl W=M_{max}(\Omega )$ is a reflexive masa bimodule, we write  $\cl W_{min}=M_{min}(\Omega ).$ 
From Lemma \ref{21} $\cl F(\cl U)=M_{max}(\sigma ).$ Therefore   $\cl F(\cl U)_{min}=M_{min}(\sigma ).$ Since 
$\cl A$ contains the masa $L^\infty (G)$ and $\cl F(\cl U)$ is a reflexive $\cl A-$bimodule the space $$\Pi= \left(\begin{array}{clr} \cl A & \cl F(\cl U)
 \\ 0 & \cl A\end{array}\right)  $$ is a CSL algebra. 
From the proof of proposition 4.7 of \cite{ele}, we have 
$$\left(\begin{array}{clr} 0 & \cl F(\cl U)_{min} \\ 0 & 0\end{array}\right) =
\left(\begin{array}{clr} 0& \cl F(\cl U) \\ 0 & 0\end{array}\right)_{min}\subseteq \Pi _{min}.$$ 
Also the diagonal of $\Pi ,$ $\left(\begin{array}{clr} \cl A & 0 \\ 0 & \cl A\end{array}\right),$ belongs to $ \Pi _{min} .$ 
Thus, $$\left(\begin{array}{clr} \cl A & \cl F(\cl U)_{min} \\ 0 & \cl A\end{array}\right)\subseteq \Pi _{min} .$$ 
But   $$Ref\left(\left(\begin{array}{clr} \cl A & \cl F(\cl U)_{min} \\ 0 & \cl A\end{array}\right)\right)=\Pi .$$ 
Therefore, $$\Pi _{min}\subseteq  \left(\begin{array}{clr} \cl A & \cl F(\cl U)_{min} \\ 0 & \cl A\end{array}\right) .$$ 
We conclude that $$\Pi _{min}= \left(\begin{array}{clr} \cl A & \cl F(\cl U)_{min} \\ 0 & \cl A\end{array}\right) .$$
Since $\Pi $ is an algebra  by \cite[Theorem 22.19]{dav}, $\Pi _{min}$ is also an algebra, which implies $\cl A \cl F(\cl U)_{min} \cl A\subseteq \cl F(\cl U)_{min} .$

Observe that 
$$\cl M \cl U_{min} \cl M^* \subseteq \cl M \cl U \cl M^* \subseteq \cl F(\cl U) .$$
Since by Lemma \ref{21} $\cl F(\cl U)$ is a reflexive space, 
 $$ Ref(\cl M \cl U_{min} \cl M^*) \subseteq \cl F(\cl U) .$$
Let $Z\in \cl F(\cl U)$ and assume that $Z$ does not belong to $Ref(\cl M \cl U_{min} \cl M^*).$ 
Thus, there exists a $\xi \in L^2(G)$  such that $Z\xi $ does not belong to $\overline{[\cl M \cl U_{min} \cl M^*\xi ]}.$  
Thus,  there exists an $\omega  \in L^2(G)$  such that $(XSY^*\xi ,\omega )=0, \;\;\forall \; X, Y \;\in \cl M, S\in \cl U_{min}$ 
and $(Z\xi ,\omega )\neq 0.$ We have $(SY^*\xi ,X^*\omega )=0, \;\;\forall \; X, Y \;\in \cl M, S\in \cl U_{min}.$ 
Since $SY^*\xi \in \overline{\cl U_{min}Y^*\xi } =\overline{\cl UY^*\xi }$, we have
$$(SY^*\xi ,X^*\omega )=0, \;\;\forall \; X, Y \;\in \cl M, S\in \cl U\Rightarrow 
(XSY^*\xi ,\omega )=0, \;\;\forall \; X, Y \;\in \cl M, S\in \cl U.$$ 
Since $ \cl F(\cl U) =[\cl M\cl U\cl M^*] ^{-w^*}, $  
$$(T\xi ,\omega )=0,\;\;\forall \;T\;\in \cl F(\cl U) .$$
Therefore $(Z\xi ,\omega )= 0.$ This contradiction shows that 
$$  \cl F(\cl U) \subseteq Ref( \cl M \cl U_{min} \cl M^* )  \Rightarrow \cl F(\cl U)=
Ref(\cl M \cl U_{min} \cl M^*) .$$ Since $   [\cl M \cl U_{min} \cl M^*]^{-w^*}  $  
is a masa bimodule,
$  \cl F(\cl U)_{min}\subseteq [\cl M \cl U_{min} \cl M^*]   ^{-w^*}  .$  By symmetry, we have 
$\cl U_{min}\subseteq [\cl M ^*\cl F(\cl U)_{min} \cl M ] ^{-w^*}.$  Thus, 
$$ \cl F(\cl U)_{min} \subseteq [\cl M \cl U_{min} \cl M^*]   ^{-w^*} \subseteq  
[\cl M\cl M ^*\cl F(\cl U)_{min} \cl M\cl M^* ] ^{-w^*}\subseteq [\cl A\cl F(\cl U)_{min} \cl A ] ^{-w^*}\subseteq \cl F(\cl U)_{min} . $$
Therefore $\cl F(\cl U_{min})=\cl F(\cl U)_{min}.$

\end{proof}
\begin{theorem}\label{23} Let $k\subseteq H\times H$ be an $\omega $-closed set. Then 

(i)  $\cl G(M_{max}(k))=M_{max}((\theta \times \theta )^{-1}(k))$ and

(ii)  $\cl G(M_{min}(k))=M_{min}((\theta \times \theta )^{-1}(k)).$
\end{theorem}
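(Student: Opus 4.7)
The plan is to reduce Theorem \ref{23} to Lemmas \ref{21} and \ref{22} by passing from $k\subseteq H\times H$ to $k':=k\cap (\alpha _0\times \alpha _0)$. Three preliminary observations drive the reduction. First, $\theta _*(\mu )(\alpha _0^c)=0$ forces $\theta ^{-1}(\alpha _0^c)$ to be $\mu $-null, so $Q(\theta ^{-1}(\alpha _0))$ equals the identity on $L^2(G)$; this yields $X=XR$ and $Y^*=RY^*$ for all $X,Y\in \cl N$ and hence $\cl N\cl U\cl N^*=\cl M(R\cl UR)\cl M^*$ for every $\cl U\subseteq B(L^2(H))$. Second, the same fact implies $\sigma \simeq \sigma '$, where $\sigma =(\theta \times \theta )^{-1}(k)$ and $\sigma '=(\theta \times \theta )^{-1}(k')$, so $M_{max}(\sigma )=M_{max}(\sigma ')$ and $M_{min}(\sigma )=M_{min}(\sigma ')$. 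Third, a direct check from the marginal-null conditions (inclusion by compression, reverse by extension by zero) shows $RM_{max}(k)R=M_{max}(k')$ as subspaces of $B(L^2(\alpha _0))$.

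Combining these observations with Lemma \ref{21} proves (i):
$$\cl G(M_{max}(k))=[\cl M\,RM_{max}(k)R\,\cl M^*]^{-w^*}=[\cl M\,M_{max}(k')\,\cl M^*]^{-w^*}=\cl F(M_{max}(k'))=M_{max}(\sigma ')=M_{max}(\sigma ).$$

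Part (ii) follows by the same scheme with Lemma \ref{22} in place of Lemma \ref{21}, provided one establishes the compression identity $[RM_{min}(k)R]^{-w^*}=M_{min}(k')$ in $B(L^2(\alpha _0))$; granted this, $\cl G(M_{min}(k))=\cl F([RM_{min}(k)R]^{-w^*})=\cl F(M_{min}(k'))=M_{min}(\sigma ')=M_{min}(\sigma )$. I expect this compression identity to be the main obstacle. For one inclusion, $[RM_{min}(k)R]^{-w^*}$ is a $w^*$-closed $L^\infty (\alpha _0)$-bimodule contained in $M_{max}(k')$, and its reflexive hull is all of $M_{max}(k')$: for $A=RTR$ with $T\in Ref(M_{min}(k))=M_{max}(k)$ and $\xi \in L^2(\alpha _0)$, one has $A\xi =RT\xi \in R\,\overline{M_{min}(k)\xi }\subseteq \overline{RM_{min}(k)R\xi }$, so minimality of $M_{min}(k')$ forces $M_{min}(k')\subseteq [RM_{min}(k)R]^{-w^*}$. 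For the reverse inclusion I would use the pairing identity $(RSR,\tilde h)_t=(S,\tilde h)_t$ arising from the natural embedding $T(\alpha _0)\hookrightarrow T(H)$ given by extension by zero: it reduces the claim to showing that this embedding sends $M_{min}(k')^\bot \subseteq T(\alpha _0)$ into $M_{min}(k)^\bot \subseteq T(H)$, which I would in turn establish from a predual description of $M_{min}^\bot $ in the style of the $Sat(I)$-construction that appears later in the paper.
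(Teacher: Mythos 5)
Your proposal is correct and follows essentially the same route as the paper's proof: reduce to $k'=k\cap(\alpha_0\times\alpha_0)$ via $\cl G(\cl U)=\cl F(R\cl UR)$, apply Lemmas \ref{21} and \ref{22}, and absorb the difference between $\sigma$ and $\sigma'$ using the marginal nullity of $\theta^{-1}(\alpha_0^c)\times G$ and $G\times\theta^{-1}(\alpha_0^c)$. You are in fact more explicit than the paper, which asserts $\cl G(M_{min}(k))=\cl F(M_{min}(k\cap(\alpha_0\times\alpha_0)))$ without discussing the compression identities; the predual step you leave as a sketch does go through, since the extension by zero of an $h\in T(\alpha_0)$ vanishing on an $\omega$-open neighbourhood of $k'$ also vanishes on $(\alpha_0^c\times H)\cup(H\times\alpha_0^c)$, hence on an $\omega$-open neighbourhood of $k$.
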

\begin{proof}  

(i) By Lemma \ref{21}, 
$$ \cl F(M_{max}( k\cap (\alpha_0 \times \alpha _0) ) =M_{max}
((\theta \times \theta )^{-1}(k\cap (\alpha_0 \times \alpha _0) )).$$
We can easily see that if $k_1, k_2$ are $\omega -$closed sets then $$M_{max}(k_1\cap k_2)=M_{max}(k_1)\cap M_{max}(k_2),$$ thus 
$$M_{max}((\theta \times \theta )^{-1}(k\cap (\alpha_0 \times \alpha _0) ))=M_{max}((\theta \times \theta )^{-1})(k))\cap 
M_{max}( (\theta \times \theta )^{-1}(\alpha_0 \times \alpha _0) ). $$
Since $\theta ^{-1}(\alpha _0)=G$ up to measure zero, the sets $G\times G, (\theta \times \theta )^{-1}(\alpha_0 \times \alpha _0) $ 
are marginally equal, thus $ M_{max} ((\theta \times \theta )^{-1}(\alpha_0 \times \alpha _0) )=B(L^2(G)).$
Therefore $$\cl G ( M_{max}(k) )=\cl F(M_{max}( k\cap (\alpha_0 \times \alpha _0) ) =M_{max}((\theta \times \theta) ^{-1}(k)).$$

(ii) By Lemma \ref{22}, 
$$\cl F(M_{min}( k\cap (\alpha_0 \times \alpha _0) )=M_{min}(
(\theta \times \theta )^{-1}(k\cap (\alpha_0 \times \alpha _0) )=M_{min}((\theta \times \theta )^{-1}(k)\cap (\theta \times \theta )^{-1}
(\alpha_0 \times \alpha _0) ).$$ Since the sets $G\times G, (\theta \times \theta )^{-1}(\alpha_0 \times \alpha _0) $ 
are marginally equal, we conclude that  $$\cl G(M_{min}(k)) =\cl F(M_{min}( k\cap (\alpha_0 \times \alpha _0) )=M_{min}((\theta \times \theta )^{-1})(k)).$$

\end{proof}

\begin{corollary}\label{24}  If $\cl U=M_{max}(k)  $ is a synthetic masa bimodule acting on $L^2(H)$,
then $\cl G(\cl U)=M_{max}((\theta \times \theta)^{-1}(k) ) $ is also synthetic.
\end{corollary}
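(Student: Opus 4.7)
The plan is to observe that the corollary follows at once from Theorem \ref{23}, since synthetic means $M_{max}(k) = M_{min}(k)$ by definition. First I would unfold the hypothesis: $\cl U = M_{max}(k)$ being a synthetic masa bimodule on $L^2(H)$ means precisely that $M_{max}(k) = M_{min}(k)$. Applying the mapping $\cl G$ to both sides of this equality yields $\cl G(M_{max}(k)) = \cl G(M_{min}(k))$.

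Next I would invoke Theorem \ref{23}: part (i) identifies the left-hand side as $M_{max}((\theta \times \theta)^{-1}(k))$, and part (ii) identifies the right-hand side as $M_{min}((\theta \times \theta)^{-1}(k))$. Chaining these equalities gives
\[
M_{max}((\theta \times \theta)^{-1}(k)) = M_{min}((\theta \times \theta)^{-1}(k)),
\]
which is exactly the statement that the $\omega$-closed set $(\theta \times \theta)^{-1}(k)$ is operator synthetic, i.e.\ $\cl G(\cl U) = M_{max}((\theta \times \theta)^{-1}(k))$ is a synthetic masa bimodule.

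There is no real obstacle here; the entire content has already been absorbed into Theorem \ref{23}. The only thing one might want to double-check is that $(\theta \times \theta)^{-1}(k)$ is indeed $\omega$-closed, so that the definition of ``synthetic'' applies to it and so that Theorem \ref{23} can be cited legitimately. But if $k \simeq (\cup_n(\alpha_n \times \beta_n))^c$ with $\alpha_n, \beta_n$ Borel in $H$, then since $\theta$ is continuous (hence Borel) the preimage $(\theta \times \theta)^{-1}(k)$ is marginally equal to $(\cup_n(\theta^{-1}(\alpha_n) \times \theta^{-1}(\beta_n)))^c$, and the hypothesis $\theta_*(\mu) \ll \nu$ ensures that preimages of marginally null sets remain marginally null, so the $\omega$-closedness survives.
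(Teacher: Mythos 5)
Your proof is correct and is exactly the argument the paper intends: Corollary \ref{24} is stated without proof as an immediate consequence of Theorem \ref{23}, obtained by applying $\cl G$ to the equality $M_{max}(k)=M_{min}(k)$ and using parts (i) and (ii). Your additional check that $(\theta\times\theta)^{-1}(k)$ remains $\omega$-closed is a sensible verification consistent with the computation already carried out in the proof of Lemma \ref{21}.
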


\begin{remark}\em{ The implication of the previous corollary was first proved in \cite[Theorem 4.7]{st}. 
In the present paper, we have given a different proof.}
\end{remark}

\begin{corollary}\label{28} Let $E\subseteq H$ be a closed set. Then

(i) $$ \cl G(M_{max}(E^*))= M_{max}(\theta ^{-1}(E)^*) \quad \text{and} \quad \cl G(M_{min}(E^*))=M_{min}(\theta ^{-1}(E)^*) ;$$

(ii) If $E$ is a set of local synthesis, then $\theta ^{-1}(E)$  is a set of local synthesis;

(iii) If $E$ is a set of local synthesis and $A(G)$ possess an approximate identity, then $\theta ^{-1}(E)$  is a set of spectral 
synthesis.

\end{corollary}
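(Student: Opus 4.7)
The plan is to deduce all three parts from Theorem~\ref{23} combined with the known transference results between synthesis in the Fourier algebra and operator synthesis on $B(L^2(G))$.

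For (i), the starting point is the set-theoretic identity
$$(\theta\times\theta)^{-1}(E^*)=\theta^{-1}(E)^*,$$
which follows from the homomorphism property: $(\theta(s),\theta(t))\in E^*$ iff $\theta(t)\theta(s)^{-1}=\theta(ts^{-1})\in E$, iff $ts^{-1}\in\theta^{-1}(E)$. Since $E$ is closed, $E^*$ is $\omega$-closed, so applying Theorem~\ref{23} with $k=E^*$ yields both equalities in (i).

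For (ii), I bridge through operator synthesis. By the Ludwig--Turowska theorem \cite{lt}, a closed subset $E\subseteq H$ is a set of local synthesis for $A(H)$ precisely when $E^*$ is operator synthetic, i.e.\ $M_{max}(E^*)=M_{min}(E^*)$. Applying $\cl G$ to both sides and invoking part (i), I conclude $M_{max}(\theta^{-1}(E)^*)=M_{min}(\theta^{-1}(E)^*)$. Note that $\theta^{-1}(E)$ is closed because $\theta$ is continuous (Borel homomorphisms between locally compact groups are automatically continuous). Applying the Ludwig--Turowska equivalence in the reverse direction for the group $G$ then yields that $\theta^{-1}(E)$ is a set of local synthesis for $A(G)$.

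Part (iii) is a one-line consequence of (ii) and the theorem of Anoussis--Katavolos--Todorov \cite{akt}: under the hypothesis that $A(G)$ possesses an approximate identity, a closed subset $F\subseteq G$ is a set of spectral synthesis if and only if $F^*$ is operator synthetic. Having shown in (ii) that $\theta^{-1}(E)^*$ is operator synthetic, this equivalence immediately delivers the spectral synthesis of $\theta^{-1}(E)$. There is essentially no obstacle: the argument is bookkeeping. The only piece of content is the homomorphism identity $(\theta\times\theta)^{-1}(E^*)=\theta^{-1}(E)^*$ used in (i); everything else is invoking Theorem~\ref{23} together with the transference theorems from \cite{lt} and \cite{akt} recalled in the introduction.
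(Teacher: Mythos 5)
Your proposal is correct and follows essentially the same route as the paper: part (i) is Theorem~\ref{23} applied to $k=E^*$ together with the identity $(\theta\times\theta)^{-1}(E^*)=\theta^{-1}(E)^*$ (which you verify explicitly, while the paper uses it silently), part (ii) transfers operator synthesis via Corollary~\ref{24} and the Ludwig--Turowska equivalence, and part (iii) reduces to (ii) using the coincidence of local and spectral synthesis in the presence of an approximate identity. No gaps.
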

\begin{proof} (i) By Theorem \ref{23}, $$ \cl G(M_{max}(E^*))=M_{max}((\theta \times \theta )^{-1}(E^*))
=M_{max}(\theta ^{-1}(E)^*).$$ Similarly, $$ \cl G(M_{min}(E^*))=M_{min}((\theta \times \theta) ^{-1}(E^*))
=M_{min}(\theta ^{-1}(E)^*).$$ 

(ii) If $E$ is a set of local synthesis, then $M_{max}(E^*)$ is a masa bimodule of operator synthesis. By Corollary \ref{24} 
and (i), $M_{max}(\theta ^{-1}(E)^*) $ is also a masa bimodule of operator synthesis. Thus, by \cite{lt},
 $\theta ^{-1}(E)$  is a set of local synthesis.

(iii) If $A(G)$ possess an approximate identity, then $\theta ^{-1}(E)$  is a set of local synthesis 
if and only if $\theta ^{-1}(E)$  is a set of spectral synthesis. Now use (ii).
\end{proof}

\begin{theorem}\label{25} Let $\cl U\subseteq B(L^2(H))$ be a masa bimodule. If $\cl G(\cl U) $ 
contains a non-zero compact operator, 
then so does $\cl U. $ The same holds replacing compact by finite rank or by rank one operator. 
\end{theorem}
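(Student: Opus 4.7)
The plan is to use the TRO $\cl N$ to pull a non-zero compact (resp.\ finite-rank, rank-one) operator $K \in \cl G(\cl U) = [\cl N\cl U\cl N^*]^{-w^*}$ back into $\cl U$ via the compression $K \mapsto X^*KY$ for suitably chosen $X, Y \in \cl N$, arranged so that the compression both lands in $\cl U$ and preserves the class of $K$.

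Three facts drive the argument. First, $\cl N L^2(H)$ is dense in $L^2(G)$: if $p$ denotes the projection onto the orthogonal complement, then $\cl N\cl N^* p = 0$, hence $\cl A p = 0$ by w*-continuity of right multiplication by $p$, where $\cl A = [\cl N\cl N^*]^{-w^*}$; since $I_{L^2(G)} \in \cl A$, this forces $p = 0$. Second, for any $X, Y \in \cl N$ the operator $X^*Y$ lies in $L^\infty(H)$: dualizing $YP(\alpha) = Q(\theta^{-1}(\alpha))Y$ gives $X^*Q(\theta^{-1}(\alpha)) = P(\alpha)X^*$, so $X^*Y P(\alpha) = P(\alpha) X^*Y$ for every Borel $\alpha \subseteq H$, placing $X^*Y$ in the commutant $L^\infty(H)' = L^\infty(H)$. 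Third, $\cl U$ is a w*-closed $L^\infty(H)$-bimodule.

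Now, if $K \in \cl G(\cl U)$ is a non-zero compact operator, density forces $X^*KY \neq 0$ for some $X, Y \in \cl N$: otherwise $\langle K(Y\zeta), X\eta\rangle = 0$ on two dense subsets of $L^2(G)$, giving $K = 0$. On the other hand, writing $K$ as the w*-limit of finite sums of the form $\sum_i A_i T_i B_i^*$ with $A_i, B_i \in \cl N$ and $T_i \in \cl U$, the w*-continuity of $S \mapsto X^*SY$ yields that $X^*KY$ is the w*-limit of the corresponding sums $\sum_i (X^* A_i) T_i (B_i^* Y)$. By the second fact each summand lies in $L^\infty(H) \cl U L^\infty(H) \subseteq \cl U$, and since $\cl U$ is w*-closed, $X^*KY \in \cl U$. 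As a product of a bounded operator, the compact (resp.\ finite-rank, rank-one) operator $K$, and another bounded operator, $X^*KY$ inherits the compactness or rank property; in the rank-one case $K = \xi \otimes \eta^*$ one chooses $X$ and $Y$ separately so that $X^*\xi \neq 0$ and $Y^*\eta \neq 0$, which is possible by the first fact applied to $\xi$ and $\eta$.

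The main technical obstacle is the w*-limit step just described: it relies on the fact that each $X^*A$ for $A \in \cl N$ actually lies in the masa $L^\infty(H)$ rather than in some larger subalgebra, so that the $L^\infty(H)$-bimodule property of $\cl U$ can be invoked to keep each finite-sum approximant inside $\cl U$.
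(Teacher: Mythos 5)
Your proof is correct and follows essentially the same route as the paper: compress $K$ on both sides by elements of the TRO, use the fact that $\cl N^*\cl N$ sits inside the masa $L^\infty(H)$ (so the compression lands in the $w^*$-closed bimodule $\cl U$), and use unitality of $\cl A=[\cl N\cl N^*]^{-w^*}$ (equivalently, density of $[\cl N L^2(H)]$) to see that some compression is non-zero. The only difference is cosmetic: the paper gets the inclusion $\cl M^*\cl G(\cl U)\cl M\subseteq \cl U$ by citing the bijectivity of $\cl F$, whereas you verify it directly.
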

\begin{proof} We have $\cl G(\cl U) =\cl F(R\cl UR)$  and $$R\cl UR=\cl F^{-1}(\cl G(U))=[\cl M^*\cl G(\cl U)\cl M
]^{-w^*}.$$  If $K\in \cl G(\cl U) $ is a non-zero compact operator, then 
$$ \cl M^*K\cl M \subseteq R\cl UR\subseteq \cl U.$$ It suffices to prove that $\cl M^*K\cl M\neq 0.$ 

Suppose that $ \cl M^*K\cl M=0. $ Then $ [\cl M\cl M^*] ^{-w^*} K[\cl M\cl M^*] ^{-w^*}  =0. $ Since 
 $[\cl M\cl M^*] ^{-w^*} =\cl A$ is an unital algebra, $K=0.$  This contradiction shows that $\cl U$
 contains a non-zero compact operator. The remaining cases are proved similarly.

\end{proof}

\begin{corollary}\label{26}  Let $k\subseteq H\times H$ be an $\omega $-closed set
and assume that $ M_{max}((\theta \times \theta )^{-1}(k) )$  (resp. $M_{min}((\theta \times \theta )^{-1}(k))$ 
contains a non-zero compact operator,   
then $M_{max}(k), $ (resp. $M_{min}(k),$) also contains a non-zero compact operator. 
The same holds replacing compact by finite rank or by rank one operator.  
\end{corollary}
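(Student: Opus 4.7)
The plan is to observe that this corollary is a direct combination of Theorem \ref{23} and Theorem \ref{25}, with no new ingredients. Let $\cl U$ denote either $M_{max}(k)$ or $M_{min}(k)$; in either case $\cl U$ is a $w^*$-closed masa bimodule in $B(L^2(H))$, so the hypothesis of Theorem \ref{25} is met.

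By Theorem \ref{23}(i), $\cl G(M_{max}(k)) = M_{max}((\theta \times \theta)^{-1}(k))$, and by Theorem \ref{23}(ii), $\cl G(M_{min}(k)) = M_{min}((\theta \times \theta)^{-1}(k))$. Consequently, the hypothesis of the corollary -- that $M_{max}((\theta \times \theta)^{-1}(k))$ (respectively $M_{min}((\theta \times \theta)^{-1}(k))$) contains a non-zero compact (respectively finite rank, respectively rank one) operator -- translates to the statement that $\cl G(\cl U)$ contains such an operator, where $\cl U$ is chosen as $M_{max}(k)$ or $M_{min}(k)$.

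Applying Theorem \ref{25} to this $\cl U$ then yields that $\cl U$ itself contains a non-zero compact (respectively finite rank, respectively rank one) operator, which is exactly the conclusion of the corollary. I do not foresee any obstacle: the only verification required is that $M_{max}(k)$ and $M_{min}(k)$ are masa bimodules, which is built into their definitions as recalled in the introduction.
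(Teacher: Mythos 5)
Your proposal is correct and is exactly the intended argument: the paper states Corollary \ref{26} as an immediate consequence of Theorem \ref{23} (identifying $\cl G(M_{max}(k))$ and $\cl G(M_{min}(k))$ with the corresponding spaces over $(\theta\times\theta)^{-1}(k)$) and Theorem \ref{25} (transference of compact, finite rank, or rank one operators from $\cl G(\cl U)$ to $\cl U$), giving no separate proof. Nothing is missing.
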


\begin{remark}\em{ The implication that if $\cl G( M_{max}(k) )$ contains a non-zero compact operator, 
 then $M_{max}(k) $ also contains a non-zero compact operator, was first proved in \cite[Corollary 4.8]{stwt} 
for some special cases of $\theta .$}
\end{remark}

\begin{theorem}\label{27} Let $I$ be a closed ideal of $A(H)$ and $\cl U=Bim(I^\bot ).$ Then there exists a 
closed ideal $J$ of $A(G)$ such that $\cl G(\cl U)=Bim(J^\bot ).$ 
\end{theorem}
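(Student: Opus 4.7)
My plan is to construct $J$ as the predual annihilator of $\cl G(\cl U) \cap VN(G)$ inside $A(G)$, and verify $\cl G(\cl U) = Bim(J^\bot)$ by combining the intertwining properties of $\cl N$ with the Anoussis--Katavolos--Todorov identity $Bim(I^\bot) = Sat(I)^\bot$.

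First, using $XM_\phi = M_{\phi\circ\theta} X$ for $X \in \cl N$ and $\phi \in L^\infty(H)$, which gives $\cl N L^\infty(H) = L^\infty(G)\cl N$, the $L^\infty(H)$-bimodule generators of $\cl U = Bim(I^\bot)$ convert into $L^\infty(G)$-bimodule generators, so
$$\cl G(\cl U) = [\cl N \cl U \cl N^*]^{-w^*} = [L^\infty(G)\,\cl N I^\bot \cl N^*\, L^\infty(G)]^{-w^*} = Bim(\cl N I^\bot \cl N^*).$$
Next, set $\cl V_0 := \cl G(\cl U) \cap VN(G)$ and $J := (\cl V_0)_\perp \subseteq A(G)$. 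To show $J$ is a closed ideal I need to check that $\cl V_0$ is invariant under the $A(G)$-action on $VN(G)$. This action extends to a $w^*$-continuous map $\Phi_u$ on $B(L^2(G))$ defined by $\langle \Phi_u(S), h\rangle_t = \langle S, N(u)h\rangle_t$, well-defined since $N(u)T(G) \subseteq T(G)$ by Spronk's theorem, and $\Phi_u$ preserves every $w^*$-closed masa bimodule; hence it preserves both $\cl G(\cl U)$ and $VN(G)$, and therefore $\cl V_0$. Duality then yields $J^\bot = \cl V_0$ with $J$ a closed ideal of $A(G)$.

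It remains to prove $\cl G(\cl U) = Bim(\cl V_0)$; only the inclusion $\cl G(\cl U) \subseteq Bim(\cl V_0)$ is nontrivial. By AKT applied in $G$, this is equivalent to $\cl G(\cl U)_\perp = Sat(J)$ in $T(G)$. I would use the slicing identity $\langle XSY^*, h\rangle_t = \langle S, Y^* h X \rangle_t$ (with $Y^* h X \in T(H)$ a naturally defined sliced element) together with $\cl U_\perp = Sat(I)$ to obtain
$$\cl G(\cl U)_\perp = \{h \in T(G) : Y^* h X \in Sat(I)\ \text{for all}\ X, Y \in \cl N\}.$$
The pullback identity $N(u\circ\theta)(s,t) = N(u)(\theta(s),\theta(t))$ then ties Schur multipliers on $T(G)$ coming from $\rho(I) = \{u\circ\theta : u \in I\}$ to those from $I$ on $T(H)$ via slicing, giving the matching $\cl G(\cl U)_\perp = [N(\rho(I))T(G)]^{-\|\cdot\|_t}$. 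The main obstacle, and likely the hardest step, is showing that this closure actually equals $Sat(J) = [N(J)T(G)]^{-\|\cdot\|_t}$: since the pullbacks $\rho(u)$ are generally not elements of $A(G)$, one must show that their Schur-multiplier actions can be approximated in $T(G)$ by those of genuine $J$-functions, exploiting the absolutely convergent Spronk representation of $N(u)$ together with the TRO structure of $\cl N$.
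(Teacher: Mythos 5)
Your opening moves are sound: $\cl G(\cl U)$ is indeed a $w^*$-closed masa bimodule (via the TRO relations $\cl N M_\phi = M_{\phi\circ\theta}\cl N$ and $\cl N\cl N^*\cl N\subseteq\cl N$), the space $\cl V_0=\cl G(\cl U)\cap VN(G)$ is $w^*$-closed and invariant under the Schur-multiplier maps $\Phi_u$, hence $J=(\cl V_0)_\perp$ is a closed ideal of $A(G)$ with $J^\bot=\cl V_0$. But the theorem is not yet proved at that point: the entire content lies in the inclusion $\cl G(\cl U)\subseteq Bim(\cl V_0)$, and for a general $w^*$-closed masa bimodule $\cl V$ the equality $\cl V=Bim(\cl V\cap VN(G))$ simply fails ($\cl V\cap VN(G)$ can be far too small), so some structural property of $\cl G(\cl U)$ beyond being a masa bimodule must be used. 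Your proposed route --- identify $\cl G(\cl U)_\perp$ with $[N(\rho(I))T(G)]^{-\|\cdot\|_t}$ and then match this with $Sat(J)=[N(J)T(G)]^{-\|\cdot\|_t}$ --- stalls exactly where you say it does, and worse, it is essentially circular: since $J$ is defined as a preannihilator built from $\cl G(\cl U)$, the equality $Sat(J)^\bot=Bim(J^\bot)=\cl G(\cl U)$ that you would need in order to compute $Sat(J)$ is the statement being proved. The paper's Remark 2.9 shows $\rho(I)$ may meet $A(G)$ only in $\{0\}$, so no approximation of $N(\rho(u))$ by $N(v)$ with $v\in J$ is available without further input; the identification $\Xi=Sat(J)$ is a separate theorem in the paper (Theorem 2.11) whose proof itself relies on Theorem 2.7 already being known.

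The missing idea is the other half of Anoussis--Katavolos--Todorov (their Theorem 4.3): a $w^*$-closed masa bimodule $\cl V\subseteq B(L^2(G))$ is of the form $Bim(J^\bot)$ for a closed ideal $J$ \emph{if and only if} $\rho^G_t\,\cl V\,\rho^G_{t^{-1}}\subseteq\cl V$ for all $t\in G$, where $\rho^G$ is the right regular representation. With this criterion the proof reduces to a short intertwining computation: from $\hat\theta(P_{\theta(t)})=\hat\theta(P)_t$ one gets $\rho^G_{t^{-1}}\cl N\rho^H_{\theta(t)}\subseteq\cl N$, so for $X,Y\in\cl N$ and $Z\in\cl U$ one writes $\rho^G_tXZY^*\rho^G_{t^{-1}}=X_1\bigl(\rho^H_{\theta(t)}Z\rho^H_{\theta(t)^{-1}}\bigr)Y_1^*$ with $X_1,Y_1\in\cl N$, and the invariance of $\cl U=Bim(I^\bot)$ under $\rho^H$ (the same AKT criterion applied in $H$) finishes the argument. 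If you want to salvage your construction, you could keep $J=(\cl V_0)_\perp$ but you must still prove the $\rho^G$-invariance of $\cl G(\cl U)$ to get $\cl G(\cl U)=Bim(\cl V_0)$; at that point the detour through preannihilators and $Sat$-spaces is unnecessary.
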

\begin{proof} Let $\rho ^G: G\rightarrow B(L^2(G)), \;\;t\rightarrow \rho ^G_t,$  be 
the right regular representation of $G$ on $L^2(G),$ that is, the representation 
$$ \rho ^G_t (f)(x)=\Delta ^G(x)^{\frac{1}{2}}f(xt),\;\; t,x\;\in \;G, \;f\in L^2(G),  $$
where $\Delta ^G$ is the modular function of $G.$
Similarly, we define the right regular representation $\rho ^H: H\rightarrow B(L^2(H))$ of the group $H.$  
By theorem 4.3 of \cite{akt}  it suffices to prove $$\rho ^G_t  \cl G(\cl U) \rho ^G_{t ^{-1}}\subseteq  \cl G(\cl U), \;\;\forall \;t\;\in \;G
.$$ 
If $P\in L^\infty (H, \nu )$ is a projection, there exists a Borel set $\alpha$ such that $P=P(\alpha )\equiv L^2(\alpha ,\nu ).$  
If $s\in H,$ 
we denote by $P_s$ the projection onto $L^2(\alpha s).$ We can easily see that $\rho _s^HP\rho _{s^{-1}}^H=P_{s^{-1}}.$
 Let $\alpha \subseteq H$ be a Borel set and $t\in G.$ Then 
\begin{align*}& \hat \theta (P_{\theta (t)})= \hat \theta (P(\alpha \theta (t)))=Q(\theta ^{-1}(\alpha \theta (t))) =\\&
Q(\theta ^{-1}(\alpha )t)=Q(\theta ^{-1}(\alpha ))_t=\hat \theta (P(\alpha ))_t=\hat \theta (P)_t,
\end{align*} where $Q(\beta )\equiv L^2(\beta ,\mu  ).$  

Thus if $X\in \cl N, P\in L^\infty (H)$ and $t\in G,$
$$XP_{\theta (t)}=\hat \theta (P)_tX.$$ Therefore,
$$X \rho ^H_{\theta (t)} P\rho ^H_{\theta (t)^{-1}} =XP_{\theta (t)^{-1}}=\hat \theta (P)_{t^{-1}}X=\rho
 ^G_{t} \hat \theta (P)\rho ^G_{\theta (t)^{-1}} X.$$
Also, $$\rho ^G_{t^{-1}}X\rho ^H_{\theta (t)}P=\hat \theta (P)\rho ^G_{t^{-1}}X\rho _{\theta (t)}^H,$$ 
for all $t\in G$ and $ P\in L^\infty (H).$ We conclude that 
$$\rho ^G_{t^{-1}}\cl N \rho ^H_{\theta (t)}\subseteq \cl N.$$
Now take $X, Y\in \cl N, t\in G $ and $ Z\in \cl U.$ There exist $X_1, Y_1\in \cl N$ such that 
$$ \rho _t^GX=X_1\rho _{\theta (t)}^H \;\text{and}\; \rho _t^GY=Y_1\rho _{\theta (t)}^H .$$
Therefore 
$$ \rho _t^GXZY^*\rho _{t^{-1}}^G =X_1\rho _{\theta (t)}^HZ\rho _{\theta (t)^{-1}}^HY_1^*.$$
By theorem 4.3 of \cite{akt}, $\rho ^H_{\theta (t)}Z\rho ^H_{\theta (t)^{-1}} \in \cl U.$ 
Thus $\rho _t^GXZY^*\rho _{t^{-1}}^G \in \cl N\cl U\cl N^*. $ We have proven 
$$\rho ^G_{t}\cl N \cl U\cl N^*\rho ^G_{t^{-1}}\subseteq \cl N \cl U\cl N^*,$$ which implies 
$$\rho ^G_{t}\cl G( \cl U)\rho ^G_{t^{-1}}\subseteq \cl G( \cl U).$$ 
\end{proof}

\begin{remark}\label{29}\em{ If $u\in A(H),$ we denote by $\rho (u)$ the function $u\circ \theta .$ There exist cases of $G, H, \theta $ 
such that $\rho (A(H))\cap A(G)=\{0\}.$ For example if $G$ is a non-compact group $\theta : G\rightarrow H$ is the trivial homomorphism and 
$u(e_H)\neq 0$  then $\rho (u)$ is a non-zero constant map and therefore doesn't belong to $A(G).$ Therefore  
in case $\rho (A(H))\cap A(G)=\{0\}$ if $I$ is a closed ideal of $A(H),$ then $\rho (I)$ is not contained in $A(G).$ Nevertheless, by Theorem \ref{27}, if $\cl U=Bim(I^\bot ),$ 
there is a closed ideal $J\subseteq A(G)$ such that 
$\cl G(\cl U)=Bim(J^\bot ). $ We are going to prove that $Sat(J)= [N(\rho (I))T(G)]^{-\|\cdot \|_t}.$ }
\end{remark}

In the sequel we fix a closed ideal $I\subseteq A(H),$ and write $\cl U=Bim(I^\bot )$ and $\Xi =
[N(\rho (I))T(G)]^{-\|\cdot \|_t}.$ Let $J\subseteq A(G)$  be a closed ideal such that $\cl G(\cl U)=Bim(J^\bot ). $ 
\begin{lemma}\label{210} The space  $\Xi ^\bot $ is a $\cl A$-bimodule.
\end{lemma}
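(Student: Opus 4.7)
The plan is to dualize: since $\cl{A}$ is a von Neumann algebra and $\Xi$ is a norm-closed subspace of $T(G)$, proving that $\Xi^\bot$ is an $\cl{A}$-bimodule in $B(L^2(G))$ is equivalent to the identity $\Xi^\bot = \bigcap_{u \in I}\ker S_{N(\rho(u))}$, where $S_\varphi : B(L^2(G)) \to B(L^2(G))$ denotes the normal Schur multiplication operator preadjoint to the action $h \mapsto \varphi \cdot h$ on $T(G)$. The main idea is to exploit the special form of the Schur multipliers $N(\rho(u))$.

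First I would unpack the Schur multiplier. Because $u \in A(H)$, the preliminaries give a decomposition $N(u) = \sum_i F_i \otimes G_i$ with Borel $F_i, G_i : H \to \bb C$ satisfying $\|\sum_i|F_i|^2\|_\infty, \|\sum_i|G_i|^2\|_\infty < +\infty$. Since $N(\rho(u))(s,t) = u(\theta(t)\theta(s)^{-1}) = N(u)(\theta(s),\theta(t))$, one obtains
$$N(\rho(u)) = \sum_i \phi_i \otimes \psi_i, \qquad \phi_i := F_i \circ \theta,\; \psi_i := G_i \circ \theta,$$
and the crucial observation is that both factors lie in $\hat\theta(L^\infty(H))$. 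The $L^\infty$ bounds are preserved under composition with $\theta$, so this is a bona fide Schur multiplier decomposition on $T(G)$. A routine pairing calculation then shows that on rank-one elementary tensors $\phi \otimes \psi$ one has $S_{\phi\otimes\psi}(T) = M_\psi T M_\phi$, and so $S_{N(\rho(u))}(T) = \sum_i M_{\psi_i} T M_{\phi_i}$ as a WOT-convergent series, and $T \in \Xi^\bot$ exactly when $S_{N(\rho(u))}(T)=0$ for every $u \in I$.

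For the main step, take $T \in \Xi^\bot$, $A, B \in \cl{A}$ and $u \in I$. Since $\cl{A}$ is defined as the commutant of $\{M_{\phi\circ\theta} : \phi \in L^\infty(H)\}$ and both $\phi_i$ and $\psi_i$ lie in $\hat\theta(L^\infty(H))$, $A$ commutes with each $M_{\psi_i}$ and $B$ commutes with each $M_{\phi_i}$. Therefore
$$S_{N(\rho(u))}(ATB) = \sum_i M_{\psi_i} A T B M_{\phi_i} = A \Bigl(\sum_i M_{\psi_i} T M_{\phi_i}\Bigr) B = A\, S_{N(\rho(u))}(T)\, B = 0,$$
the series manipulation being justified in the WOT by the $L^\infty$ bounds on $\sum |\phi_i|^2$, $\sum|\psi_i|^2$. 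This holds for every $u \in I$, so $ATB \in \Xi^\bot$ and $\Xi^\bot$ is an $\cl{A}$-bimodule.

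The main conceptual obstacle is the decomposition step: one must check that $N(\rho(u))$ has a Schur representation with both factors in $\hat\theta(L^\infty(H))$. This is exactly the reflection of the structural fact that $N(\rho(u))(s,t)$ depends on $(s,t)$ only through $(\theta(s),\theta(t))$, and it is what makes $\cl A$, rather than only $L^\infty(G)$, act. The remaining technical point is the harmless bookkeeping of WOT-convergence of $\sum M_{\psi_i} T M_{\phi_i}$, which follows from the standard $L^\infty$ bounds on the decomposition.
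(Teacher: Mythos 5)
Your proof is correct, and it takes a noticeably different route from the paper's. The paper works with the generators $V_1V_2^*$ of $\cl A$ coming from the TRO $\cl N$: it pairs $V_1V_2^*XV_3V_4^*$ against an elementary tensor $N(u\circ\theta)(f\otimes g)$, pushes each factor $M_{\phi_i\circ\theta}$ through the $V_j$'s via the intertwining relation $V^*M_{\phi\circ\theta}=M_{\phi}V^*$, reassembles the result as $\bigl(X,\,N(u\circ\theta)(V_3V_4^*f\otimes V_2V_1^*g)\bigr)_t=0$, and only at the end invokes $\cl A=[\cl N\cl N^*]^{-w^*}$ (together with the weak*-closedness of $\Xi^\bot$ and separate weak*-continuity of multiplication) to pass from the generators to all of $\cl A$. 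You instead bypass $\cl N$ entirely: you characterize $\Xi^\bot$ as $\bigcap_{u\in I}\ker S_{N(\rho(u))}$, observe that $N(\rho(u))(s,t)=N(u)(\theta(s),\theta(t))$ forces every factor of the Schur decomposition to lie in $\hat\theta(L^\infty(H))$, and then use the \emph{definition} of $\cl A$ as the commutant of $\{M_{\phi\circ\theta}:\phi\in L^\infty(H)\}$ to commute $A$ and $B$ past the multipliers. Both arguments rest on the same structural fact, but yours buys a cleaner statement (it handles arbitrary elements of $\cl A$ at once, with no density-plus-closure step and no appeal to $[\cl N\cl N^*]^{-w^*}=\cl A$ from the cited theorem), at the modest cost of having to justify the WOT-convergence of $\sum_i M_{\psi_i}TM_{\phi_i}$ and the interchange with left and right multiplication --- both of which follow routinely from the Cauchy--Schwarz estimate using $\|\sum_i|\phi_i|^2\|_\infty,\|\sum_i|\psi_i|^2\|_\infty<\infty$ (note that these bounds do transfer from $H$ to $G$ precisely because $\theta_*(\mu)\ll\nu$, a point worth making explicit). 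The paper's version, by contrast, reuses machinery ($\cl N$ and its intertwining relations) that is needed again immediately in Theorem~\ref{211}.
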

\begin{proof} Let $V_1, V_2\in \cl N, X\in \Xi ^\bot , u\in I$ and $f, g\in L^2(G).$ If 
$N(u)=\sum_i \phi _i\otimes \psi _i,$ we have 
\begin{align*}& (V_1V_2^*XV_3V_4^*, N(u\circ \theta )(f\otimes g))_t= \sum_i(V_1V_2^*XV_3V_4^*, ((\phi _i\circ \theta )f)
\otimes ((\psi _i\circ \theta )g ))_t=
\\& \sum_i(V_2^*XV_3, V_4^*((\phi _i\circ \theta )(f))\otimes V_1^*((\psi _i\circ \theta )(g) ))_t=
\sum_i(V_2^*XV_3, V_4^*(M_{\phi _i\circ \theta}( f))\otimes V_1^*(M_{\psi _i\circ \theta}( g )))_t=\\& 
\sum_i(V_2^*XV_3, M_{\phi _i}V_4^*(f)\otimes M_{\psi _i}V_1^*(( g ))_t= 
\sum_i(X, V_3M_{\phi _i}V_4^*(f)\otimes V_2M_{\psi _i}V_1^*(g ))_t= \\ &
\sum_i(X, M_{\phi _i\circ \theta }V_3V_4^*(f)\otimes M_{\psi _i\circ \theta }V_2V_1^*(g ))_t= 
(X, N(u\circ \theta )(V_3V_4^*(f)\otimes V_2V_1^*(g)))_t
\end{align*}
Since $N(u\circ \theta )(V_3V_4^*(f)\otimes V_2V_1^*(g))\in \Xi $ and $X\in \Xi ^\bot $, we have 
$( V_1V_2^*XV_3V_4^* , N(u\circ \theta )(f\otimes g))_t= 0$ Thus $ V_1V_2^*XV_3V_4^* \in \Xi ^\bot .$ 
The algebra $\cl A$ is equal to $[\cl N\cl N^*]^{-w^*},$ therefore $\cl A\Xi ^\bot \cl A\subseteq \Xi ^\bot .$

\end{proof}

\begin{theorem}\label{211} The spaces  $\Xi $ and $Sat(J)$ are equal.
\end{theorem}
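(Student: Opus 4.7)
The plan is to reduce the statement to the equivalent identity $\Xi^\bot = \cl G(\cl U)$ inside $B(L^2(G))$. Once this is in hand, the bipolar theorem (using that $\Xi$ is $\|\cdot\|_t$-closed in $T(G)$) together with the chain $\cl G(\cl U) = Bim(J^\bot) = Sat(J)^\bot$ (Theorem~\ref{27} together with the Anoussis--Katavolos--Todorov identity from \cite{akt}) gives $\Xi = \cl G(\cl U)^\bot = Sat(J)$.

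For the inclusion $\cl G(\cl U) \subseteq \Xi^\bot$, the pairing $(\cdot, h)_t$ is $w^*$-continuous for each fixed $h \in T(G)$, so by the $w^*$-density of $\cl N \cl U \cl N^*$ in $\cl G(\cl U)$ it suffices to check $(V_1 Z V_2^*, N(\rho(u))(f \otimes g))_t = 0$ for $V_1, V_2 \in \cl N$, $Z \in \cl U$, $u \in I$ and $f, g \in L^2(G)$. Writing $N(u) = \sum_i \phi_i \otimes \psi_i$, so that $N(\rho(u)) = \sum_i (\phi_i \circ \theta) \otimes (\psi_i \circ \theta)$, I would shift the multipliers across the $V_j$'s by repeatedly applying the TRO covariance $V M_\phi = M_{\phi \circ \theta} V$ for $V \in \cl N$ and its adjoint form, exactly as in the computation of Lemma~\ref{210}. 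The pairing reshapes into $(Z, N(u)(h'))_t$ for some $h' \in T(H)$, which vanishes since $N(u) h' \in Sat(I)$ and $Z \in \cl U = Sat(I)^\bot$.

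For the opposite inclusion $\Xi^\bot \subseteq \cl G(\cl U)$, I would invoke the $\cl A$-bimodule classification of \cite{ele}. By Lemma~\ref{210}, $\Xi^\bot$ is an $\cl A$-bimodule, hence $\Xi^\bot = \cl F(\cl W)$ with $\cl W := [\cl M^* \Xi^\bot \cl M]^{-w^*}$; since $\cl G(\cl U) = \cl F(R \cl U R)$ and $\cl F$ is an order isomorphism, it suffices to prove $\cl W \subseteq R \cl U R$, and for this it is enough to show $V_1^* Y V_2 \in R \cl U R$ for all $V_1, V_2 \in \cl M$ and $Y \in \Xi^\bot$. The formula $(S^{\mathrm{ext}}, h)_t = (S, (R \otimes R) h)_t$ for $S \in B(L^2(\alpha_0))$ extended by zero to $L^2(H)$ identifies $R \cl U R$, viewed inside $B(L^2(\alpha_0))$, with the annihilator of the $\|\cdot\|_t$-closure of $N(I) T(L^2(\alpha_0))$. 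Thus one needs $(V_1^* Y V_2, N(u)(p \otimes q))_t = 0$ for $u \in I$ and $p, q \in L^2(\alpha_0)$, and a TRO manipulation analogous to the one in the first step (now using the defining relations of $\cl M$) converts this into $(Y, N(\rho(u))(h''))_t$ for some $h'' \in T(G)$, which vanishes since $N(\rho(u)) h'' \in \Xi$ and $Y \in \Xi^\bot$.

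The main technical obstacle is the careful execution of these two parallel pairing computations: one must correctly track the complex conjugations implicit in $(T, f \otimes g)_t = (Tf, \bar g)$ and apply $V M_\phi = M_{\phi \circ \theta} V$ and $V^* M_{\phi \circ \theta} = M_\phi V^*$ in the right order to shift every multiplier through the TRO elements. A subsidiary point is the identification of $R \cl U R$ with the annihilator of the closure of $N(I) T(L^2(\alpha_0))$, but this follows at once from the compression formula above.
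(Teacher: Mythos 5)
Your proposal is correct and follows essentially the same route as the paper: the forward inclusion is the identical pairing computation showing $\cl N\,\cl U\,\cl N^*\subseteq \Xi^\bot$, and the reverse inclusion uses Lemma~\ref{210} to place $\Xi^\bot$ in the range of $\cl F$ and then compresses by the TRO to land in $R\,\cl U\,R$ (identified, as in the paper, with the annihilator of $N(I)T(L^2(\alpha_0))$), which is just a repackaging of the paper's sandwich argument $\cl N\cl N^*\Xi^\bot\cl N\cl N^*\subseteq \cl F(R\,\cl U\,R)$ via unitality of $\cl A$. The final passage to $\Xi=Sat(J)$ by the bipolar theorem and the identity $Bim(J^\bot)=Sat(J)^\bot$ matches the paper as well.
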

\begin{proof}  First we are going to prove that $$\cl NBim(I^\bot )\cl N^*\subseteq \Xi ^\bot .$$
  Let $V_1, V_2\in \cl N, X\in \Xi ^\bot , u\in I$ and $f, g\in L^2(G).$ If 
$N(u)=\sum_i \phi _i\otimes \psi _i,$ we have 
\begin{align*} & (V_2XV_1^*, N(u\circ \theta )(f\otimes g))_t =\sum_i(V_2XV_1^*, ((\phi _i\circ \theta )f)\otimes ((\psi _i\circ \theta 
)g))_t=\\ & \sum_i(X, V_1^*M_{\phi _i\circ \theta }(f)\otimes V_2^*M_{\psi _i\circ \theta }(g))_t=
\sum_i(X, M_{\phi _i}V_1^*(f)\otimes M_{\psi _i}V_2^*(g))_t=\\ & (X, N(u)(V_1^*(f)\otimes V_2^*(g)))_t.
\end{align*}
Since $X\in Bim(I^\bot )$ and $u\in I$, we have 
$$(V_2XV_1^*, N(u\circ \theta )(f\otimes g))_t =0.$$
Thus $$\cl NBim(I^\bot )\cl N^*\subseteq \Xi ^\bot \Rightarrow Bim(J^\bot )\subseteq \Xi ^\bot \Rightarrow \Xi \subseteq Sat(J).$$
If $X\in \Xi ^\bot $ and $V_1, V_2, V_3, V_4\in \cl N,$ then Lemma \ref{210} implies that
$$V_1V_2^*XV_3V_4^*\in \Xi ^\bot .$$ Thus for all $u\in I,f, g \in L^2(G),$ we have 
$$0=(V_1V_2^*XV_3V_4^*, N(u\circ \theta )(f\otimes g))_t=(V_2^*XV_3, N(u)(V_1^*(f)\otimes V^*_4(g)))_t.$$
Since $$R(L^2(H))=\overline{[\cl M^*(L^2(G))]},$$ we conclude that 
$$0=(V_2^*XV_3, N(u|_{\alpha_0\times \alpha_0}(f\otimes g)), \;\forall \;f,g \;\in L^2(\alpha _0),\;u\in I.$$
Since $$RBim(I^\bot )R=[N(u|_{\alpha_0\times \alpha _0}(f\otimes g): u\in I, \;\;f,g \;\in L^2(\alpha _0)]^\bot ,$$
we have that $V_2^*XV_3\in  RBim(I^\bot )R .$ Therefore 
$$\cl N^*\Xi ^\bot \cl N\subseteq RBim(I^\bot )R ,$$ which implies 
$$\cl N\cl N^*\Xi ^\bot \cl N\cl N^*\subseteq \cl F(RBim(I^\bot )R )=\cl G(Bim(I^\bot ))=Bim(J^\bot ).$$ 
The space $\cl A=[\cl N\cl N^*]^{-w^*}$ is an unital algebra, thus 
$$\Xi ^\bot \subseteq Bim(J^\bot )\Rightarrow Bim(J^\bot )^\bot \subseteq \Xi \Rightarrow Sat(J)\subseteq \Xi .$$
Since we have already shown  $\Xi \subseteq Sat(J)$, we obtain the required equality.
\end{proof}

For the following theorem, we recall from \cite{tt} that a closed ideal $J\subseteq A(G)$ 
is an ideal of multiplicity if $J^\bot \cap C_r^*(G)\neq \{0\},$ where $C_r^*(G)$ is the reduced $C^*$-algebra of $G.$

\begin{theorem}\label{212} Let $I$ be a closed ideal of $A(H).$ By Theorems \ref{27} and \ref{211}  there exists 
 a closed ideal $J\subseteq A(G)$ such that 
$$\cl G(Bim(I^\bot ))= Bim(J^\bot ) \;\text{and} \;Sat(J)=[N(I)(T(G))]^{-\|\cdot \|_t}.$$ 
If $J$ is an ideal of multiplicity, then $I$ is also an ideal of multiplicity.

\end{theorem}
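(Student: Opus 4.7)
The plan is to reduce the claim to a routine combination of two already-available tools: the Todorov--Turowska characterization of ideals of multiplicity via masa bimodules from \cite{tt}, and Theorem \ref{25} which transfers the presence of a non-zero compact operator from $\cl G(\cl U)$ back to $\cl U$.

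First, I would invoke \cite{tt} to record that a closed ideal $K$ of the Fourier algebra of a locally compact second countable group is an ideal of multiplicity if and only if the masa bimodule $Bim(K^\bot)$ contains a non-zero compact operator. Applied to $J$, the hypothesis that $J$ is an ideal of multiplicity gives a non-zero compact operator $K_0\in Bim(J^\bot)$. By the standing identity $\cl G(\cl U)=\cl G(Bim(I^\bot))=Bim(J^\bot)$ from Theorems \ref{27} and \ref{211}, this compact operator lies in $\cl G(\cl U)$.

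Next, I would feed this into Theorem \ref{25}: since $\cl G(\cl U)$ contains a non-zero compact operator, so does $\cl U=Bim(I^\bot)$. Finally, I would apply the \cite{tt} characterization in the reverse direction, now for the group $H$: because $Bim(I^\bot)$ contains a non-zero compact operator, the ideal $I\subseteq A(H)$ must be an ideal of multiplicity. This completes the proof.

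Since all the heavy lifting (construction of $J$, identification of $Sat(J)$, and the compact-operator transfer) has already been done earlier in the paper, I do not expect any genuine obstacle. The only point worth double-checking is that the Todorov--Turowska equivalence is available without additional hypotheses on the group beyond local compactness and second countability, which is exactly the setting of \cite{tt}, so it applies to both $G$ and $H$ here.
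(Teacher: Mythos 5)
Your proposal is correct and follows essentially the same route as the paper: the paper's proof likewise cites Corollary 1.5 of \cite{tt} to convert the multiplicity hypothesis on $J$ into a non-zero compact operator in $Bim(J^\bot)=\cl G(Bim(I^\bot))$, applies Theorem \ref{25} to transfer it to $Bim(I^\bot)$, and then cites the same corollary again to conclude that $I$ is an ideal of multiplicity.
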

\begin{proof}  By \cite [Corollary 1.5 ]{tt}, if $J$ is an ideal of multiplicity, then $Bim(J^\bot )$ contains a non-zero compact operator. 
By Theorem \ref{25},
 $Bim(I^\bot )$ contains a non-zero compact operator. Thus, again by \cite[Corollary 1.5 ]{tt}, $I$ is an ideal of multiplicity.
\end{proof}

A closed set $E\subseteq H$ is called an $M$-set (resp. an $M_1$-set) if the ideal $J_H(E)$ (resp. $I_H(E)$) is an ideal 
of multiplicity. Corollaries \ref{28} (i) and \ref{26} together with \cite[Corollary 3.6]{tt} imply the following:

\begin{corollary} If $E\subseteq H$ is a closed set such that $\theta ^{-1}(E)$ is an $M$-set (resp. an $M_1$-set), 
then  $E$ is an $M$-set (resp. an $M_1$-set). 
\end{corollary}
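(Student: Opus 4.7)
The proof is a short chain assembled from three ingredients, essentially a bookkeeping argument once the machinery of Sections~\ref{2222} is in place. The plan is to use Corollaries~\ref{28}(i) and~\ref{26} to transfer a nonzero compact operator from a bimodule on $G$ down to one on $H$, and then invoke \cite[Corollary~3.6]{tt} which (as I understand it) characterizes $M$-sets and $M_1$-sets on a locally compact second countable group in terms of the existence of a nonzero compact operator in $M_{\max}(F^*)$ or $M_{\min}(F^*)$ respectively, where $F$ is the closed set in question.

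First, suppose $\theta^{-1}(E)$ is an $M$-set in $G$. By \cite[Corollary~3.6]{tt} applied to the group $G$ and the closed set $\theta^{-1}(E)$, the reflexive masa bimodule $M_{\max}(\theta^{-1}(E)^*)\subseteq B(L^2(G))$ contains a nonzero compact operator. By Corollary~\ref{28}(i),
\[
M_{\max}(\theta^{-1}(E)^*)=\cl G(M_{\max}(E^*)),
\]
so $\cl G(M_{\max}(E^*))$ contains a nonzero compact operator. Applying Corollary~\ref{26} (the ``compact operator'' case), it follows that $M_{\max}(E^*)\subseteq B(L^2(H))$ also contains a nonzero compact operator. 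Invoking \cite[Corollary~3.6]{tt} once more, now on $H$, we conclude that $E$ is an $M$-set.

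The case of $M_1$-sets is completely parallel: replace $M_{\max}$ by $M_{\min}$ throughout, using the second half of Corollary~\ref{28}(i) to identify $M_{\min}(\theta^{-1}(E)^*)$ with $\cl G(M_{\min}(E^*))$, and the $M_{\min}$-version of Corollary~\ref{26} to pull a nonzero compact operator back to $M_{\min}(E^*)$. The only substantive issue is bookkeeping, namely matching the ``$M$-set / $J_H(E)$ / $M_{\max}$'' and ``$M_1$-set / $I_H(E)$ / $M_{\min}$'' sides of \cite[Corollary~3.6]{tt} correctly; all the analytic content has already been absorbed into Corollaries~\ref{28}(i) and~\ref{26}, so there is no further obstacle.
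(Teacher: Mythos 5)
Your proposal is correct and is exactly the paper's argument: the author proves this corollary by simply citing Corollaries~\ref{28}(i) and~\ref{26} together with \cite[Corollary 3.6]{tt}, and you have spelled out precisely that chain, including the correct matching of $M$-sets with $J_H(E)$ and $M_{\max}$ and of $M_1$-sets with $I_H(E)$ and $M_{\min}$.
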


\begin{remark}\em{ The previous corollary was proven in \cite{stwt} for some special cases of $\theta .$}
\end{remark}

\section{The case when $ \theta _*(\mu ) $ is a Haar measure for $\theta (G)$}\label{3333}

Let $G $ and $ H$ be locally compact, second countable groups with Haar measures $\mu$ and $ \nu $ respectively. Suppose that $\theta : G\rightarrow H$ 
is a continuous homomorphism, and assume that $m=\theta _*(\mu ) <<\nu .$ Since $G$ is a $\sigma -$compact set and $\theta $ 
is a continuous map then $\theta (G)$ is also a $\sigma -$compact set and hence a Borel set. Also  $\theta _*(\mu ) <<\nu $ 
implies that $\nu (\theta (G))>0.$ By Steinhaus theorem the group $\theta(G)$ contains an open set. We conclude that $\theta(G)$ 
is an open set. We note that the open subgroups of a locally compact group are closed. Using these facts we can easily see that $\nu |_{H_0}$ is a Haar measure of $H_0=\theta (G).$ 
In some cases $m <<\nu $ implies that $m$ is a Haar measure for $H_0.$ Thus there exists $c>0$ such that $m|_{H_0}=c \nu |_{H_0}.$ In 
this section we investigate this equality. We can replace the Haar measure $\nu $ with $c\nu $ and thus we may assume that 
$m(\alpha )=\nu (\alpha )$ for all Borel sets $\alpha \subseteq H_0.$ 

For every $u\in A(H),$ we define $\rho (u)=u\circ \theta .$ We are going to prove that $\rho : A(H)\rightarrow A(G)$ 
is a continuous homomorphism and that if $I$ is a closed ideal of $A(H)$ and $ \cl U=Bim(I^\bot )$, then 
$\cl G(\cl U)=Bim(\rho _*(I)^\bot ),  $ where $\rho _*(I)$ is the closed ideal of $A(G)$ generated by $\rho (I)$ 
and $\cl G$ is the map defined in Section \ref{2222}. For every $u\in A(H),$ we denote by $\pi (u)$ 
the function $u|_{H_0}.$ By \cite[Theorem 2.6.4] {kan}, $\pi (u)\in A(H_0)$ for all $u\in A(H).$ Thus if $u\in A(H),$ 
there exist $f, g\in L^2(H_0)$ such that 
$$u(t)=\pi (u)(t)=(\lambda _t^{H_0}f, g),\;\;\forall \;t\;\in \;H_0.$$ By \cite [Corollary 2.6.5]{kan}, 
the map $\pi : A(H)\rightarrow A(H_0)$ is contractive onto homomorphism, thus $\pi (I)$ is an ideal of $A(H_0).$
Also, observe that the map $A: L^2(H_0)\rightarrow L^2(G)$ given by $A(f)=f\circ \theta $ is an isometry.

\begin{lemma}\label{311} Let $u\in A(H).$ Then $\rho (u)\in A(G).$ Actually, if 
$$u(t)=\pi (u)(t)=(\lambda _t^{H_0}f, g),\;\;\forall \;t\;\in \;H_0,$$ 
then $$\rho (u)(s)= (\lambda _s^GAf, Ag) ,\;\;\forall \;s\;\in \;G.$$
\end{lemma}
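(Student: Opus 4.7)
The proof is essentially a direct change-of-variables computation, so my plan is to unwind both sides of the claimed equality and match them using three ingredients: (i) the definition $\rho(u) = u\circ \theta$ together with $\pi(u) = u|_{H_0}$; (ii) the pushforward identity $\int_G F(\theta(x))\,d\mu(x) = \int_{H_0} F(t)\,d\nu(t)$, valid because $\theta_*(\mu) = \nu$ on $H_0$ by the section's standing hypothesis; and (iii) the homomorphism identity $\theta(s^{-1}x) = \theta(s)^{-1}\theta(x)$. The assertion that $\rho(u) \in A(G)$ is not an independent step: once the representing formula $\rho(u)(s) = (\lambda_s^G Af, Ag)$ is established with $Af, Ag \in L^2(G)$, membership in $A(G)$ follows immediately from the definition of the Fourier algebra.

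First I would fix $s \in G$ and compute
\begin{equation*}
\rho(u)(s) \;=\; u(\theta(s)) \;=\; \pi(u)(\theta(s)) \;=\; \bigl(\lambda^{H_0}_{\theta(s)} f, g\bigr)_{L^2(H_0)} \;=\; \int_{H_0} f\bigl(\theta(s)^{-1} t\bigr)\,\overline{g(t)}\,d\nu(t).
\end{equation*}
Next I would apply the pushforward formula with integrand $F(t) = f(\theta(s)^{-1}t)\,\overline{g(t)}$ on $H_0$; this converts the right-hand side into $\int_G f(\theta(s)^{-1}\theta(x))\,\overline{g(\theta(x))}\,d\mu(x)$. Using the homomorphism property to rewrite $\theta(s)^{-1}\theta(x) = \theta(s^{-1}x)$ and unwinding the definition of $A$, this integral is exactly
\begin{equation*}
\int_G (Af)(s^{-1}x)\,\overline{(Ag)(x)}\,d\mu(x) \;=\; \bigl(\lambda^G_s Af, Ag\bigr)_{L^2(G)}.
\end{equation*}

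The key conceptual point — and the only spot where the section's extra hypothesis that $\theta_*(\mu)$ be a Haar measure on $H_0 = \theta(G)$ is used — is the validity of the pushforward change of variables together with the isometry of $A:L^2(H_0)\to L^2(G)$. Both rely on $m|_{H_0} = \nu|_{H_0}$, which the paper has normalized at the beginning of Section \ref{3333}. There is no serious obstacle beyond bookkeeping; the only subtlety is that one cannot omit the normalization (otherwise a constant $c$ would appear, and $A$ would be a scalar multiple of an isometry rather than an isometry). Once this is in place, the equality $\rho(u)(s) = (\lambda_s^G Af, Ag)$ holds pointwise on $G$, and since $Af, Ag \in L^2(G)$, this representation identifies $\rho(u)$ as an element of $A(G)$ with $\|\rho(u)\|_{A(G)} \leq \|f\|_2\|g\|_2$, which also proves continuity of $\rho : A(H) \to A(G)$ (useful later) with norm at most $1$ after taking the infimum over such representations of $\pi(u)$.
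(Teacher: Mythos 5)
Your proof is correct and is essentially the paper's own argument: the paper performs the identical change-of-variables computation (unwinding $(\lambda_s^G Af, Ag)$ via $Af = f\circ\theta$, the homomorphism identity, the pushforward $\theta_*(\mu)=m$, and the normalization $m|_{H_0}=\nu|_{H_0}$), just written in the reverse direction, starting from $(\lambda_s^G Af, Ag)$ and ending at $u(\theta(s))$. Your closing remarks on the norm bound $\|\rho(u)\|_{A(G)}\le\|f\|_2\|g\|_2$ anticipate the paper's Theorem \ref{312} but are not needed for the lemma itself.
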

\begin{proof} For every $s\in G,$ we have 
\begin{align*}&(\lambda _s^GAf, Ag)= \int_G Af(s^{-1}t) Ag(t)d\mu (t) =
\int_G (f\circ \theta )(s^{-1}t) (g\circ \theta )(t)d\mu (t) =\\ &
\int_G (f_ {\theta (s^{-1})} \circ \theta )(t) (g\circ \theta )(t)d\mu (t) = \int _{H_0}f_{\theta (s^{-1})} 
(t)g(t)dm(t) =\\ &\int _{H_0}f_{\theta (s^{-1})} (t)g(t)d\nu (t)=(\lambda _{\theta (s)}^{H_0}f, g) =u(\theta (s)).
\end{align*}
\end{proof}
\begin{theorem}\label{312} The map $\rho : A(H)\rightarrow A(G)$ is a continuous homomorphism.
\end{theorem}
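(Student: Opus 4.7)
The plan is to show that $\rho$ is a bounded linear map and a multiplicative map, both of which will follow almost immediately from Lemma \ref{311} combined with the contractive homomorphism $\pi : A(H) \to A(H_0)$ supplied by \cite[Corollary 2.6.5]{kan}.

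First I would establish boundedness, and in particular the norm estimate $\|\rho(u)\|_{A(G)} \le \|u\|_{A(H)}$. Fix $u \in A(H)$ and pick any representation $\pi(u)(t) = (\lambda_t^{H_0} f, g)$ with $f,g \in L^2(H_0)$. By Lemma \ref{311}, $\rho(u)(s) = (\lambda_s^G Af, Ag)$ for all $s \in G$, which exhibits $\rho(u)$ as an element of $A(G)$ with $\|\rho(u)\|_{A(G)} \le \|Af\|_2 \|Ag\|_2$. Since $A$ is an isometry, this bound equals $\|f\|_2 \|g\|_2$, and taking infimum over all admissible $(f,g)$ yields $\|\rho(u)\|_{A(G)} \le \|\pi(u)\|_{A(H_0)} \le \|u\|_{A(H)}$, using that $\pi$ is contractive.

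Next I would verify the algebraic properties of $\rho$. Linearity is obvious from the pointwise formula $\rho(u)(s) = u(\theta(s))$. For multiplicativity, one has the pointwise identity
\[
\rho(uv)(s) = (uv)(\theta(s)) = u(\theta(s))\, v(\theta(s)) = \rho(u)(s)\, \rho(v)(s)
\]
for all $s \in G$ and all $u,v \in A(H)$. Since $\rho(u)$ and $\rho(v)$ already lie in $A(G)$ by the previous step, and $A(G)$ is a pointwise algebra, $\rho(u)\rho(v) \in A(G)$, and the pointwise equality above forces $\rho(uv) = \rho(u)\rho(v)$ as elements of $A(G)$.

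There is really no obstacle here: the only nontrivial ingredients are the isometry property of $A: L^2(H_0) \to L^2(G)$, which rests on the standing assumption of this section that $m = \theta_*(\mu)$ agrees with $\nu$ on $H_0$, and the existence of the contractive restriction homomorphism $\pi$. Both are already in hand, so the proof reduces to packaging Lemma \ref{311} into a norm bound and reading off the multiplicative property pointwise.
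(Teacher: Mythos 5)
Your proposal is correct and follows essentially the same route as the paper's own proof: multiplicativity is read off pointwise from $\rho(u)=u\circ\theta$, and the norm bound $\|\rho(u)\|_{A(G)}\le\|u\|_{A(H)}$ is obtained exactly as in the paper by applying Lemma \ref{311}, using that $A$ is an isometry, taking the infimum over representations of $\pi(u)$, and invoking the contractivity of $\pi$.
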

\begin{proof} If $u_1, u_2\in A(H),$ 
then
$$ \rho (u_1u_2) =(u_1u_2)\circ \theta =u_1\circ \theta \cdot u_2\circ \theta =\rho (u_1) \rho (u_2). $$
Let $u\in A(H).$ We assume that $$f, g \in L^2(H_0),\;\;\pi (u)(t)=(\lambda _t^{H_0}f, g),\;\;\forall \;t\;\in \;H_0.$$
By Lemma \ref{311}, $$\rho (u)(s)= (\lambda _s^GAf, Ag) ,\;\;\forall \;s\;\in \;G.$$
Thus $$\|\rho (u)\|_{A(G)}\leq  \|Af\|_2\|Ag\|_2 =\|f\|_2\|g\|_2 $$ for all $f$ and $g$ such that 
$\pi (u)(t)=(\lambda _t^{H_0}f, g).$ Thus $$\|\rho (u)\|_{A(G)}\leq \|\pi (u)\|_{A(H_0)}.$$ 
Since $\pi $ is a contraction, $$\|\rho (u)\|_{A(G)}\leq \|u\|_{A(H)}.$$ 
\end{proof}

\begin{lemma}\label{313} Let $I\subseteq A(H)$ be an ideal and $X\in B(L^2(G))$ such that 
$$(X, N(\rho (u))h)_t=0,\;\;\;\forall \;h\;\in \;T(G),\;u\;\in \;I.$$
Then $$ (X, N(v)h)_t=0 ,\;\;\;\forall \;h\;\in \;T(G),\;v\;\in \;\rho _*(I).$$
\end{lemma}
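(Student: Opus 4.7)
The plan is to reduce the question to showing two things: first, that the ``algebraic'' ideal generated by $\rho(I)$ (that is, $\rho(I)+A(G)\rho(I)$) annihilates $X$ against $N(\cdot)h$, and second, that this vanishing passes to the norm closure in $A(G)$, which by definition equals $\rho_*(I)$.

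For the algebraic step, I would exploit the multiplicativity of $N$ with respect to pointwise multiplication. Given $w\in A(G)$ and $u\in I$, the product $w\rho(u)$ lies in $A(G)$ by Theorem \ref{312}, and for all $(s,t)\in G\times G$ one has $N(w\rho(u))(s,t)=w(ts^{-1})\rho(u)(ts^{-1})=N(w)(s,t)\,N(\rho(u))(s,t)$. Hence, for any $h\in T(G)$,
\begin{equation*}
N(w\rho(u))\,h \;=\; N(\rho(u))\,\bigl(N(w)\,h\bigr),
\end{equation*}
where the bracketed factor lies in $T(G)$ because $N(w)$ preserves $T(G)$ (the multiplier property recalled in the introduction and proved in \cite{spronk}). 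The hypothesis applied to $N(w)h\in T(G)$ then gives $(X,N(w\rho(u))h)_t=0$. By linearity, this extends to every finite sum $v=\sum_i w_i\rho(u_i)+\sum_j \rho(u_j)$ with $w_i\in A(G)$ and $u_i,u_j\in I$, i.e.\ to every element of the algebraic ideal $\rho(I)+A(G)\rho(I)$ generated by $\rho(I)$ in $A(G)$.

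For the density/continuity step, I would invoke Spronk's result that the map $v\mapsto N(v)$ sends $A(G)$ boundedly into the Schur-multiplier algebra of $T(G)$: there is a constant $C$ such that $\|N(v)h\|_t\le C\|v\|_{A(G)}\|h\|_t$ for every $h\in T(G)$. Consequently, for each fixed $h\in T(G)$ and $X\in B(L^2(G))$, the linear functional
\begin{equation*}
\Phi_{X,h}:A(G)\to\mathbb{C},\qquad \Phi_{X,h}(v)=(X,N(v)h)_t
\end{equation*}
is continuous. Since $\Phi_{X,h}$ vanishes on the algebraic ideal generated by $\rho(I)$ (by the previous paragraph), it vanishes on its $\|\cdot\|_{A(G)}$-closure, which is exactly $\rho_*(I)$. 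This is the required conclusion.

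The main obstacle, such as it is, is handling the continuity of $v\mapsto N(v)h$ cleanly; however this is already packaged in the multiplier bound from \cite{spronk} cited earlier in the paper, so no new work is required. A minor care point is to confirm that $\rho_*(I)$ is indeed the norm-closure of $\rho(I)+A(G)\rho(I)$: this follows because $A(G)$ is a commutative Banach algebra and $\rho_*(I)$ is by definition the smallest closed ideal containing $\rho(I)$, so it agrees with the closure of this algebraic ideal.
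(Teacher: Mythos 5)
Your proposal is correct and follows essentially the same route as the paper: the paper defines $K=\{v\in\rho_*(I):(X,N(v)h)_t=0\ \forall h\}$, shows it is a closed ideal containing $\rho(I)$ via the same identity $N(v_1v_2)h=N(v_1)(N(v_2)h)$ and the fact that $N(v_2)$ preserves $T(G)$, and concludes $K=\rho_*(I)$ by minimality. Your version merely makes explicit the continuity estimate that the paper leaves implicit in the claim that $K$ is closed; there is no substantive difference.
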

\begin{proof} Let 
$$K=\{v\in \rho _*(I): (X, N(v)h)_t=0 ,\;\;\;\forall \;h\;\in \;T(G)\}.$$ Clearly $K$ is a closed subset of $A(G)$ and  
 $\rho(I) \subseteq K\subseteq \rho_*(I) .$ If $v_1\in K$ and$ v_2\in A(G),$ we have 
$$(X, N(v_1v_2)h)_t=(X, N(v_1)( N(v_2)(h))_t.$$ Since $N(v_2)(h)\in T(G), \;\;v_1\in K$, we have 
$(X, N(v_1v_2)h)_t=0.$ Thus $v_1v_2\in K.$ Therefore $K$ is an ideal. We conclude that $K=\rho _*(I).$
\end{proof} 
In the following theorem, we fix a closed ideal $I\subseteq A(H),$ we assume that $\cl U=Bim(I^\bot )$ 
and define $ \cl G(\cl U) =[\cl N\cl U\cl N^*]^{-w^*},$ where $\cl N$ is the TRO defined in Section \ref{2222}. 
We are going to prove that $\cl G(\cl U) =Bim(\rho _*(I)^\bot ).$

\begin{theorem}\label{314} The space $\cl G(\cl U) $ is equal to $Bim(\rho _*(I)^\bot ).$
\end{theorem}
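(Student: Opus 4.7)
The plan is to glue together the abstract ideal produced by Theorem \ref{27}, the saturation description of Theorem \ref{211}, and the fact from Theorem \ref{312} that $\rho$ actually lands in $A(G)$. By Theorem \ref{27}, there is already a closed ideal $J\subseteq A(G)$ with $\cl G(\cl U)=Bim(J^\bot )$, and by Theorem \ref{211} one has $Sat(J)=[N(\rho (I))T(G)]^{-\|\cdot \|_t}$. Since Theorem \ref{312} tells us $\rho (A(H))\subseteq A(G)$, the closed ideal $\rho _*(I)$ of $A(G)$ generated by $\rho (I)$ is well-defined, and $Sat(\rho _*(I))=[N(\rho _*(I))T(G)]^{-\|\cdot \|_t}$ is a legitimate object.

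The key reduction is then to show $Sat(J)=Sat(\rho _*(I))$; once this is in hand, the Anoussis--Katavolos--Todorov identity $Bim(K^\bot )=Sat(K)^\bot $ (recalled in the introduction, applied to $K=J$ and $K=\rho _*(I)$) immediately gives
\[
\cl G(\cl U)=Bim(J^\bot )=Sat(J)^\bot =Sat(\rho _*(I))^\bot =Bim(\rho _*(I)^\bot ).
\]
One inclusion $Sat(J)\subseteq Sat(\rho _*(I))$ is trivial from $\rho (I)\subseteq \rho _*(I)$. For the reverse, I would apply Lemma \ref{313}: any $X\in B(L^2(G))$ annihilating all elements $N(\rho (u))h$ with $u\in I, h\in T(G)$ automatically annihilates all $N(v)h$ with $v\in \rho _*(I)$. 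In other words, the annihilators in $B(L^2(G))$ of $N(\rho (I))T(G)$ and of $N(\rho _*(I))T(G)$ coincide, and because $T(G)$ is the predual of $B(L^2(G))$, the bipolar theorem lifts this to equality of the norm-closed linear spans, i.e.\ $Sat(\rho _*(I))\subseteq Sat(J)$.

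I do not expect a genuine obstacle: the substantive work has already been carried out in Theorems \ref{27}, \ref{211}, \ref{312} and Lemma \ref{313}. The only care required is in the bipolar step — one must be scrupulous that the duality pairing $(\cdot ,\cdot )_t$ is nondegenerate on the subspaces in question, and that $\rho _*(I)$ really is $A(G)$-closed so that Theorem \ref{27} and the AKT identity apply on the $\rho _*(I)$ side — but both points are immediate once Theorem \ref{312} is available, because $\rho _*(I)$ is defined as the closed ideal generated by $\rho (I)$ inside the Banach algebra $A(G)$.
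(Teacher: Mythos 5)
Your proof is correct, and it runs on the same engine as the paper's, namely Lemma \ref{313}; the difference is only in where the duality is applied. The paper also begins with Theorem \ref{211}, obtains $Bim(\rho _*(I)^\bot )\subseteq \cl G(\cl U)$ from the trivial containment $Sat(J)\subseteq Sat(\rho _*(I))$, and then proves the converse at the operator level: it shows by a direct computation (essentially repeating the pairing identity $(V_1XV_2^*, N(u\circ \theta )(f\otimes g))_t=(X, N(u)(V_2^*(f)\otimes V_1^*(g)))_t$ already used in the proof of Theorem \ref{211}) that every $V_1XV_2^*$ with $X\in \cl U$, $V_1,V_2\in \cl N$ annihilates $N(\rho (I))T(G)$, and then invokes Lemma \ref{313} to conclude $\cl N\cl U\cl N^*\subseteq Sat(\rho _*(I))^\bot =Bim(\rho _*(I)^\bot )$. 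You instead work one level down in the predual: Lemma \ref{313} together with $N(\rho (I))T(G)\subseteq N(\rho _*(I))T(G)$ shows the two sets have the same annihilator in $B(L^2(G))$, the bipolar theorem gives $Sat(J)=Sat(\rho _*(I))$, and the identity $Bim(K^\bot )=Sat(K)^\bot $ finishes. This buys you economy, since you never repeat the TRO computation; the apparent extra cost of invoking the Anoussis--Katavolos--Todorov identity for both $J$ and $\rho _*(I)$ is not a real one, because the paper uses that same identity implicitly in both of its inclusions (to pass from $Sat(J)\subseteq Sat(\rho _*(I))$ to the containment of the $Bim$'s, and to conclude membership in $Bim(\rho _*(I)^\bot )$ from annihilation of $N(\rho _*(I))T(G)$). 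The two caveats you flag --- nondegeneracy of the pairing $(\cdot ,\cdot )_t$ and $\rho _*(I)$ being a genuine closed ideal of $A(G)$ --- are indeed immediate from $T(G)$ being the predual of $B(L^2(G))$ and from Theorem \ref{312}, so there is no gap.
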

\begin{proof} By Theorem \ref{211} there exists a closed ideal $J$ of $A(G)$ such that 
$\cl G(\cl U) =Bim(J^\bot )$ and $ Sat(J) =[N(u\circ \theta )h: u\in I, h\in T(G)]^{-\|\cdot\|_t}.$
Clearly $$Sat(J) \subseteq Sat(\rho _*(I))\Rightarrow Bim(\rho _*(I)^\bot )\subseteq Bim(J^\bot )= \cl G(\cl U) .$$
We need to prove $$\cl G(\cl U) \subseteq Bim(\rho _*(I)^\bot ).$$
It suffices to prove $$\cl N \cl U \cl N^* \subseteq Bim(\rho _*(I)^\bot ).$$
Let $X\in \cl U,\;\;V_1,V_2\in \cl N,\;u\in I.$ Assume that $N(u)=\sum_i\phi _i\otimes \psi _i.$ 
For all $f, g \in L^2(G)$ we have 
\begin{align*} & (V_1XV_2^*, N(u\circ \theta )(f\otimes g))_t =\sum_i(V_1XV_2^*, (\phi _i\circ \theta )
f\otimes (\psi _i\circ \theta )g)_t =\\
&\sum_i(X, V_2^*( (\phi _i\circ \theta )
f )\otimes V_1^*(\psi _i\circ \theta )g))_t .\end{align*}
We have $$V_2^*((\phi _i\circ \theta )f )=V_2^*M_{\phi _i\circ \theta }(f)= M_{\phi _i}V_2^*(f) .$$
Similarly, $$V_1^*((\psi _i\circ \theta )g)= M_{\psi _i}V_1^*(g)  .$$
Therefore 
$$(V_1XV_2^*, N(u\circ \theta )(f\otimes g))_t =\sum_i(X, M_{\phi _i} V_2^*(f) \otimes M_{\psi _i} V_1^*(g)   )_t=
(X, N(u)(V_2^*(f) \otimes V_1^*(g)   ))_t=0,$$
because $X\in Bim(I^\bot )$ and $u\in I.$
Therefore 
$$ (V_1XV_2^*, N(u\circ \theta )(h))_t =0\Rightarrow 
(V_1XV_2^*, N(\rho (u))(h))_t =0,\;\;\forall \;h\;\in \;T(G), \;u\;\in \;I$$
By Lemma \ref{313}, we have $$
(V_1XV_2^*, N(v)(h))_t =0,\;\;\forall \;h\;\in \;T(G), \;v\;\in \;\rho _*(I).$$ Thus 
$V_1XV_2^*\in Bim(\rho _*(I)^\bot ),$ which implies  $$\cl N \cl U\cl N^* \subseteq Bim(\rho _*(I)^\bot ).$$
\end{proof} 
\begin{theorem}\label{315} Let $I$ be a closed ideal of $A(H).$ If $\rho _*(I)$ is an ideal of multiplicity then $I$ is also an ideal of multiplicity.

\end{theorem}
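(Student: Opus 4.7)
The plan is to deduce this essentially as an immediate corollary of the work already done, by combining Theorem \ref{314} with the characterization of multiplicity ideals in terms of compact operators in $Bim(\cdot)$.

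First I would invoke Theorem \ref{314}, which identifies $\cl G(Bim(I^\bot))$ with $Bim(\rho_*(I)^\bot)$. This means that in the framework of Theorem \ref{212}, the ideal $J$ of $A(G)$ produced there can be taken to be $\rho_*(I)$ itself. Thus the problem is reduced to exactly the hypothesis of Theorem \ref{212} with $J=\rho_*(I)$.

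Next I would apply \cite[Corollary 1.5]{tt}: since $\rho_*(I)$ is assumed to be an ideal of multiplicity, $Bim(\rho_*(I)^\bot)$ contains a non-zero compact operator. Via the identification $Bim(\rho_*(I)^\bot) = \cl G(Bim(I^\bot))$, Theorem \ref{25} then transfers this non-zero compact operator back to $Bim(I^\bot)$, yielding that the latter also contains a non-zero compact operator. A final application of \cite[Corollary 1.5]{tt} in the other direction gives that $I$ is an ideal of multiplicity in $A(H)$.

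No obstacle is anticipated — this is essentially a one-line deduction that repackages Theorem \ref{212} using the explicit description of $J$ provided in Section \ref{3333}. The only thing to notice is that we do not need to reprove the compact-operator transference (already in Theorem \ref{25}), nor the equivalence between multiplicity of an ideal and presence of non-zero compacts in its associated $Bim$ (already in \cite{tt}); combining these two ingredients with the identification afforded by Theorem \ref{314} closes the argument.
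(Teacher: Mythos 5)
Your proposal is correct and coincides with the paper's argument: the paper also derives Theorem \ref{315} directly as a consequence of Theorems \ref{212} and \ref{314}, with the identification $\cl G(Bim(I^\bot))=Bim(\rho_*(I)^\bot)$ letting one take $J=\rho_*(I)$ in Theorem \ref{212}. Your expanded unpacking of the compact-operator transference via Theorem \ref{25} and \cite[Corollary 1.5]{tt} is exactly what the cited theorems already encapsulate.
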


The proof of the above theorem is consequence of Theorems \ref{212} and \ref{314}.

In the last part of this section we will prove that if $\theta _*(\mu )$ is a Haar measure 
for $\theta (G)$  and $A(G)$ contains a (possibly unbounded) 
identity and $E\subseteq H$ is an ultra strong Ditkin set, then $\theta ^{-1}(E)$ is also an ultra strong Ditkin set.

\begin{definition}\label{gias1} Let $E\subseteq H$ be a closed set. We call $E$ an ultra strong Ditkin set if there exists a bounded 
net $(u_\lambda)\subseteq J_H^0(E)$ such that 
$u_\lambda u\rightarrow u,$ for every $u\in I_H(E).$ 
\end{definition}

\begin{lemma}\label{gias2} Let $E\subseteq H$ be a closed set. Then $$\rho (J_H^0(E))\subseteq J_G^0(\theta ^{-1}(E)).$$
\end{lemma}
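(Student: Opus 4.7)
The plan is to take an arbitrary $u \in J_H^0(E)$ and produce the required open neighborhood of $\theta^{-1}(E)$ on which $\rho(u)$ vanishes, together with the membership $\rho(u) \in A(G)$.

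First I would invoke Theorem \ref{312} (which uses the standing assumption of this section that $\theta_*(\mu)$ is a Haar measure for $\theta(G)$) to conclude that $\rho(u) = u \circ \theta \in A(G)$. This is the only nontrivial ingredient, and without it $\rho(u)$ would only be a bounded continuous function on $G$ rather than an element of the Fourier algebra; luckily it has already been established.

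Next, unpack the definition of $J_H^0(E)$: there exists an open set $\Omega \subseteq H$ with $E \subseteq \Omega$ and $u|_\Omega = 0$. Set $\Omega' = \theta^{-1}(\Omega)$. Since $\theta$ is continuous, $\Omega'$ is open in $G$; since $E \subseteq \Omega$, we have $\theta^{-1}(E) \subseteq \theta^{-1}(\Omega) = \Omega'$; and for every $s \in \Omega'$ we have $\theta(s) \in \Omega$, hence $\rho(u)(s) = u(\theta(s)) = 0$. Thus $\rho(u)|_{\Omega'} = 0$, which together with $\rho(u) \in A(G)$ gives $\rho(u) \in J_G^0(\theta^{-1}(E))$.

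There is no real obstacle here; the statement is essentially a soft consequence of continuity of $\theta$ combined with Theorem \ref{312}. The only subtlety worth flagging is that one must use Theorem \ref{312} rather than merely continuity of $\theta$, since without the Haar-measure hypothesis of Section \ref{3333} it is not generally true that $\rho$ maps $A(H)$ into $A(G)$ (as noted in Remark \ref{29}).
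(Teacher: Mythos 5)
Your proof is correct and follows essentially the same route as the paper: pull back the open set $\Omega$ to $\theta^{-1}(\Omega)$ via continuity of $\theta$ and observe that $u\circ\theta$ vanishes there. Your explicit appeal to Theorem \ref{312} for the membership $\rho(u)\in A(G)$ is a point the paper leaves implicit, but it is the same argument.
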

\begin{proof}If $u\in J_H^0(E)$, there is an open set $\Omega \subseteq H$ such that $E\subseteq \Omega $ and $u|_{\Omega }=0.$ 
We consider the open set $ \theta ^{-1}(\Omega ) .$ Since $u\circ \theta |_{\theta ^{-1}(\Omega ) }=0$ and $ \theta ^{-1}(E )  \subseteq 
\theta ^{-1}(\Omega ), $ we conclude that $\rho (u)=u\circ \theta \in J_G^0(\theta ^{-1}(E)).$
\end{proof}

\begin{lemma}\label{gias4} Let $E\subseteq H$ be a closed set and suppose $(u_\lambda)\subseteq J_H^0(E )$ is a bounded net such that 
$u_\lambda u\rightarrow u$ for every $u\in I_H(E).$ Then $\rho (u_\lambda)v\rightarrow v$ for every $v\in \rho _*(I_H(E)).$ 
\end{lemma}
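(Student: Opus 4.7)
The plan is to exploit that $\rho:A(H)\to A(G)$ is a bounded homomorphism by Theorem \ref{312}, so that the net $(\rho(u_\lambda))$ is bounded in $A(G)$ by some constant $M$, and then to verify the claimed convergence first on a dense subspace of $\rho_*(I_H(E))$ and extend to the whole ideal via a standard boundedness argument.

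First I would check convergence on elements of the form $v=\rho(u)w$ with $u\in I_H(E)$ and $w\in A(G)$, and on elements of the form $v=\rho(u)$ itself. Multiplicativity of $\rho$ yields $\rho(u_\lambda)\rho(u)=\rho(u_\lambda u)$, and the hypothesis $u_\lambda u\to u$ in $A(H)$ combined with continuity of $\rho$ gives $\rho(u_\lambda u)\to \rho(u)$ in $A(G)$. Multiplying by $w$, which is a continuous operation in the Banach algebra $A(G)$, then produces $\rho(u_\lambda)v\to v$. By linearity, this yields $\rho(u_\lambda)v'\to v'$ for every $v'$ in the linear span $\mathcal{D}$ of all such elements.

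By the definition of $\rho_*(I_H(E))$ as the closed ideal of $A(G)$ generated by $\rho(I_H(E))$, together with commutativity of $A(G)$, the subspace $\mathcal{D}$ is dense in $\rho_*(I_H(E))$. Given $v\in \rho_*(I_H(E))$ and $\epsilon>0$, choose $v'\in\mathcal{D}$ with $\|v-v'\|_{A(G)}<\epsilon$. Then
$$\|\rho(u_\lambda)v-v\|_{A(G)}\le \|\rho(u_\lambda)(v-v')\|_{A(G)}+\|\rho(u_\lambda)v'-v'\|_{A(G)}+\|v'-v\|_{A(G)},$$
which is at most $(M+1)\epsilon+\|\rho(u_\lambda)v'-v'\|_{A(G)}$. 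Since the middle term tends to zero, we obtain $\limsup_\lambda\|\rho(u_\lambda)v-v\|_{A(G)}\le (M+1)\epsilon$; as $\epsilon>0$ was arbitrary, $\rho(u_\lambda)v\to v$.

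The argument is essentially routine. The only point that requires attention is correctly identifying the dense subspace $\mathcal{D}$ of $\rho_*(I_H(E))$ on which the convergence is tested directly, and this is precisely where the boundedness of the net $(\rho(u_\lambda))$ enters, allowing the passage from $\mathcal{D}$ to its closure. I do not anticipate any substantive obstacle beyond this standard bookkeeping.
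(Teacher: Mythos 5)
Your proof is correct and follows essentially the same route as the paper's: the paper packages the same three facts (convergence on $\rho(I_H(E))$ via multiplicativity and continuity of $\rho$, stability under multiplication by $A(G)$, and passage to the closure using boundedness of $(\rho(u_\lambda))$) by showing that the set of $v$ for which the convergence holds is a closed ideal containing $\rho(I_H(E))$. Your explicit dense-subspace-plus-$\varepsilon/3$ formulation is just a different bookkeeping of the identical argument.
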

\begin{proof} Define the space 
$$I=\{ v\in \rho _*(I_H(E)) : v=\lim_\lambda\rho (u_\lambda)v \}. $$ If $u\in I_H(E)$ we have  $\rho (u_\lambda)\rho (u)\rightarrow \rho (u).$  Then $\rho (I_H(E))\subseteq I.$ 
 If $v_1\in I, v_2\in A(G),$ we have 
$$\rho (u_\lambda)v_1\rightarrow v_1\Rightarrow \rho (u_\lambda)v_1v_2\rightarrow v_1v_2,$$ 
thus $v_1v_2\in I.$ Therefore  $I$ is an ideal. Since $(\rho (u_\lambda))$ is a bounded net we can easily see that 
if $(v_i)\subseteq I$ is a sequence such that $v_i\rightarrow v$, then $\lim_\lambda \rho (u_\lambda)v=v.$ Thus $I$ is a closed ideal, which implies $I=\rho _*(I_H(E)) .$ 
The proof is complete. 
 
\end{proof}

\begin{theorem}\label{gias5} Let $E\subseteq H$ be a closed set and assume that $A(G)$ has a (possibly unbounded) approximate 
identity.  If $E$ is an ultra strong Ditkin set, then  $\theta ^{-1}(E)$ is an ultra strong Ditkin set.

\end{theorem}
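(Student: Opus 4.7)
The plan is to take $w_\lambda := \rho(u_\lambda) = u_\lambda\circ\theta$ as the candidate bounded net for $\theta^{-1}(E)$, where $(u_\lambda)_{\lambda \in \Lambda}$ is a bounded net in $J_H^0(E)$ witnessing that $E$ is ultra strong Ditkin. Lemma \ref{gias2} places each $w_\lambda$ in $J_G^0(\theta^{-1}(E))$, and Theorem \ref{312} shows that $\rho$ is contractive, so $(w_\lambda)$ is bounded in $A(G)$. The entire task reduces to proving $w_\lambda v \to v$ in norm for every $v \in I_G(\theta^{-1}(E))$.

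First I observe that $E$, being ultra strong Ditkin, is automatically a set of spectral (hence local) synthesis: for $u \in I_H(E)$ one has $u_\lambda u \in J_H^0(E) \subseteq J_H(E)$ and $u_\lambda u \to u$. The hypothesis that $A(G)$ possesses an approximate identity then allows me to apply Corollary \ref{28}(iii) and conclude $J_G(\theta^{-1}(E)) = I_G(\theta^{-1}(E))$. Since $A(G) \cap C_c(G)$ is dense in $A(G)$, I may moreover arrange that the approximate identity of $A(G)$ consists of compactly supported functions. A diagonal argument then produces, for any given $v \in I_G(\theta^{-1}(E))$, a sequence $v_n \in J_G^0(\theta^{-1}(E))$ of compactly supported elements with $v_n \to v$ in $A(G)$ (first choose $v_n' \in J_G^0(\theta^{-1}(E))$ converging to $v$ by synthesis, then multiply by a member of the compactly supported approximate identity, remaining in the ideal $J_G^0(\theta^{-1}(E))$).

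For each such $v_n$, set $L_n := \mathrm{supp}(v_n)$, a compact subset of $G$ disjoint from $\theta^{-1}(E)$. Then $\theta(L_n)$ is a compact subset of $H \setminus E$, and the regularity of $A(H)$ supplies $\tilde u_n \in A(H)$ with $\tilde u_n|_E = 0$ and $\tilde u_n \equiv 1$ on $\theta(L_n)$. Thus $\tilde u_n \in I_H(E)$ and $\rho(\tilde u_n) \equiv 1$ on $L_n$, so $v_n = \rho(\tilde u_n)\, v_n \in \rho(I_H(E))\cdot A(G) \subseteq \rho_*(I_H(E))$. Lemma \ref{gias4} then gives $w_\lambda v_n \to v_n$ for each $n$, and a standard $\varepsilon/3$ estimate using the boundedness of $(w_\lambda)$ upgrades this to $w_\lambda v \to v$ for the original $v$.

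The main obstacle is the approximation step in the second paragraph: producing compactly supported members of $J_G^0(\theta^{-1}(E))$ converging to an arbitrary $v \in I_G(\theta^{-1}(E))$. This rests on two separate ingredients — the spectral synthesis of $\theta^{-1}(E)$, which allows one to reach $J_G^0$, together with the existence of a compactly supported approximate identity in $A(G)$, which truncates supports while remaining inside the ideal $J_G^0(\theta^{-1}(E))$. Once those approximants are secured, the regularity-plus-pullback argument that places each $v_n$ inside $\rho_*(I_H(E))$ and the subsequent invocation of Lemma \ref{gias4} are entirely routine.
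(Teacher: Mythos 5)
Your proof is correct, and the skeleton (candidate net $\rho(u_\lambda)$, Lemma \ref{gias2} for membership in $J_G^0(\theta^{-1}(E))$, contractivity of $\rho$ for boundedness, Lemma \ref{gias4} for convergence on $\rho_*(I_H(E))$) matches the paper's. Where you diverge is in the crucial step linking $\rho_*(I_H(E))$ to $I_G(\theta^{-1}(E))$. The paper proves the exact ideal equality $I_G(\theta^{-1}(E))=\rho_*(I_H(E))$ by running the bimodule machinery: operator synthesis of $\theta^{-1}(E)^*$ (Corollary \ref{24}) collapses $M_{max}(\theta^{-1}(E)^*)$ onto $M_{min}(\theta^{-1}(E)^*)$, the latter is identified with $Bim(I_G(\theta^{-1}(E))^\bot)$ by a theorem of Anoussis--Katavolos--Todorov, the former with $Bim(\rho_*(I_H(E))^\bot)$ by Theorem \ref{314}, and a further lemma from the same reference converts the bimodule equality back into an equality of ideals. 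You avoid Theorem \ref{314} and that conversion entirely: you only establish that a dense subset of $I_G(\theta^{-1}(E))$ --- namely the compactly supported elements of $J_G^0(\theta^{-1}(E))$, dense by spectral synthesis of $\theta^{-1}(E)$ (Corollary \ref{28}(iii)) plus a compactly supported approximate identity --- sits inside $\rho_*(I_H(E))$, via a direct regularity argument in $A(H)$ (a function equal to $1$ on $\theta(\mathrm{supp}\,v_n)$ and vanishing on $E$), and then the boundedness of the net upgrades density to the full statement. Your route is more elementary at this step and isolates exactly what is needed (density rather than equality of ideals); the paper's route is heavier but yields the identity $I_G(\theta^{-1}(E))=\rho_*(I_H(E))$ as a byproduct of independent interest. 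Both arguments still rest on the same transference of synthesis through Corollaries \ref{24} and \ref{28}, so neither is strictly weaker in its hypotheses.
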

\begin{proof}

 Theorem 5.3 of \cite{akt} implies that 
$$   M_{min}(\theta ^{-1}(E)^*)  = Bim(I_G(\theta ^{-1}(E))^\bot ).$$ If $E$ is an ultra strong  Ditkin set, then by Corollary \ref{24} the set $\theta ^{-1}(E)^*$ 
is operator synthetic. Thus $$M_{max}(\theta ^{-1}(E)^*)=  M_{min}(\theta ^{-1}(E)^*)=  Bim(I_G(\theta ^{-1}(E))^\bot ).$$
From Theorem \ref{314}, we have $$M_{max}(\theta ^{-1}(E)^*)= Bim(\rho _*(I_H(E)^\bot ).$$
Lemma 4.5 in  \cite{akt} implies that 
\begin{equation}\label{kke}I_G(\theta ^{-1}(E))=\rho _*(I_H(E)).\end{equation}  
Now Lemmas \ref{gias2} and \ref{gias4} together with (\ref{kke}) imply that $\theta ^{-1}(E)$ is also an ultra strong Ditkin set.

\end{proof}

\bigskip

\noindent {\bf Acknowledgement. } 
We thank the anonymous referee for useful suggestions that led to the improvement of the presentation.


\begin{thebibliography}{99}

\bibitem{akt}
M. Anoussis, A. Katavolos and I. G. Todorov, Ideals of $A(G)$ and bimodules over maximal abelian selfadjoint 
algebras, {\em J. Funct. Anal. } 266 (2014), no. 11, 6473--6500.

\bibitem{arv}
 W. B. Arveson, Operator algebras and invariant subspaces,
{\em Ann. Math.} (2) 100 (1974), 433--532.

\bibitem{bm} D. P. Blecher and C. Le Merdy, {\em Operator Algebras and Their Modules---An Operator Space Approach}, 
Oxford University Press, 2004. 


\bibitem{dav}
K. R. Davidson, \newblock {\em Nest Algebras},
\newblock Longman, Harlow, UK, 1988.


\bibitem{ele}
G. K. Eleftherakis, TRO equivalent algebras, {\em Houston J. of Math.} 38 (2012) no. 1, 153--175.

\bibitem{ele2}
G. K. Eleftherakis, Bilattices and Morita equivalence of MASA  bimodules, {\em Proc. Edinb. Math. Soc.} 
59 (2016), 605--621.

\bibitem{ept} G. K. Eleftherakis, V. I. Paulsen, and I. G. Todorov, 
Stable isomorphism of dual operator spaces, {\em J. Funct. Anal.} 258 (2010), no. 1, 260--278 .


\bibitem{F} J. Froelich, Compact operators, invariant subspaces and spectral synthesis, {\em J. Funct. Anal.} 81 (1998), no. 1, 1--37.

\bibitem{kan2} E. Kaniuth, {\em A course in Commutative Banach Algebras}, 
Springer-Verlag, Berlin, 2008.

\bibitem{kan} E. Kaniuth and A. To-Ming Lau, {\em Fourier and Fourier--Stieltjes Algebras on Locally Compact Groups}, 
Mathematical Surveys and Monographs Vol. 231,  AMS, 2018.

\bibitem{kl} A. Kleppner, Measurable Homomorphisms on Locally Compact Groups, {\em Proceedings of the AMS}
106 (1989), no. 2, 391--395.


\bibitem{lt} J. Ludwig and L. Turowska, 
On the connections between sets of operator synthesis and sets of spectral synthesis for locally compact 
groups, {\em J. Funct. Anal.} 233 (2006), no. 1, 206--227.

\bibitem{mm} M. Moskowitz, Uniform boudness for non-abelian groups, {\em Math. Proc. Cambridge Phil. Soc.},
 97 (1985), 107--110.

\bibitem{st}
V. S. Shulman and L. Turowska, Operator synthesis I. Synthetic sets, bilattices and tensor algebras, 
{\em J. Funct. Anal.} 209 (2004), no. 2, 293--331. 

\bibitem{stwt} V. S. Shulman, I. G. Todorov and L. Turowska, 
Sets of multipliers and closable multipliers on group algebras, 
{\em J. Funct. Anal.} 268 (2015), no. 6, 1454--1508.

\bibitem{spronk} N. Spronk, Measureable Shur multipliers and completely bounded multipliers of the Fourier algebras, {\em Proc. London Math. Soc.} 
89 (2004), no 3, 161-192.

\bibitem{SSTT} N. Spronk and L. Turowska, Spectral synthesis and operator synthesis for compact groups, 
{\em J. London Math. Soc.} 66 (2002), no. 2, 361--376.

\bibitem{tt} 
 I. G. Todorov and L. Turowska, 
Transference and preservation of uniquenes, {\em Israel J. Math.} 230 (2019) no. 1, 1-21.

\end{thebibliography}
\end{document}